\newcommand\mathens[1]{\mathbb{#1}} 
\newcommand{\ud}{\mathrm{d}}
\newcommand{\N}{\mathens{N}}
\newcommand{\Z}{\mathens{Z}}
\newcommand{\R}{\mathens{R}}
\newcommand{\RP}{\mathens{R}\mathrm{P}}
\newcommand{\CP}{\mathens{C}\mathrm{P}}
\newcommand{\HP}{\mathens{H}\mathrm{P}}
\newcommand{\CaP}{\mathens{C}\mathrm{aP}}
\newcommand\sphere[1]{\mathens{S}^{#1}}
\newcommand{\cont}{\textup{Cont}}
\newcommand{\contoc}{\textup{Cont}_{0}}
\newcommand{\id}{\textup{id}}
\DeclareMathOperator\im{im}
\DeclareMathOperator\crit{Crit}
\newtheorem{thm}{Theorem}[section]
\newtheorem{lem}[thm]{Lemma}
\newtheorem{cor}[thm]{Corollary}
\newtheorem{prop}[thm]{Proposition}
\newtheorem{question}{Question}
\newtheorem{prop-def}[thm]{Definition-proposition}
\theoremstyle{definition}
\newtheorem{definition}[thm]{Definition}
\theoremstyle{remark}
\begin{document}

\title{Morse estimates for translated points on unit tangent bundles}

\author[S. Allais]{Simon Allais}
\address{Simon Allais, 
Université Paris Cité and Sorbonne Université, CNRS, IMJ-PRG,
\newline\indent F-75013 Paris, France.}
\email{simon.allais@imj-prg.fr}
\urladdr{https://webusers.imj-prg.fr/~simon.allais/}
\date{May 27, 2022}
\subjclass[2020]{53C22, 53D25, 57R17, 58E10}
\keywords{Arnold conjecture,
Chas-Sullivan product,
geodesics,
Lusternik-Schnirelmann theory,
Morse theory,
translated points of contactomorphisms,
unit tangent bundles,
Zoll metrics}

\begin{abstract}
    In this article, we study conjectures of Sandon on the minimal
    number of translated points in the special case of the unit
    tangent bundle of a Riemannian manifold.
    We restrict ourselves to contactomorphisms of $SM$ that lift
    diffeomorphisms of $M$ homotopic to identity.
    We prove that there exist sequences $(p_n,t_n)$ where $p_n$
    is a translated point of time-shift $t_n$ with $t_n\to+\infty$
    for a large class of manifolds.
    We also prove Morse estimates on the number of translated points
    in the case of Zoll Riemannian manifolds.
\end{abstract}
\maketitle

\section{Introduction}

In this article, we study conjectures essentially due to Sandon
on the minimal number of translated points in the special
case of the unit tangent bundle of a Riemannian manifold.
Let us recall the definition of translated points.
Let $(V^{2n-1},\alpha)$ be an oriented contact manifold with a fixed contact form $\alpha$
(\emph{i.e.} $\alpha\wedge(\ud\alpha)^{n-1}$ does not vanish).
A contactomorphism $\varphi\in\cont(V,\alpha)$ is a diffeomorphism of $V$
such that $\varphi^*\alpha = e^g\alpha$ for some $g:V\to\R$.
A point $p\in V$ is a discriminant point of $\varphi$ if and only if
it is fixed by $\varphi$ and $g(p)=0$ (this definition does not
depend on the choice of the contact form associated with $\ker\alpha$).
Let $(\phi^\alpha_t)$ be the Reeb flow of $\alpha$.
A point $p\in V$ is a translated point of $\varphi$ if and only if
it is a discriminant point of $\phi^\alpha_{-t}\circ\varphi$
for some $t\in\R$ called a time-shift of $p$.
Similarly to the Hamiltonian case, using a Weinstein neighborhood
of the graph of the identity, one can prove that for every
contactomorphism of a closed contact manifold
$\varphi\in\cont(V,\alpha)$ which is $C^1$-close
to the identity,
\begin{equation}\label{eq:Sandon}
    \# \{ p\in V\ |\ p \text{ is a translated point of } \varphi\}
    \geq \min_{f\in C^\infty(V,\R)} \# \crit(f),
\end{equation}
where $\crit(f)$ denotes the set of critical points of $f$ \cite{San13}.
Moreover, if the Reeb flow is periodic, this inequality is sharp
(see \emph{e.g.} the introduction of \cite{minimalLens}).
In \cite{San13}, Sandon proved that this inequality still holds without the
``$C^1$-close'' assumption, as long as $\varphi$ is isotopic to the identity
in the case of the real projective spaces $\RP^{2n-1}=\sphere{2n-1}/(z\sim -z)$ endowed with
the contact form induced by $\alpha_0 = \frac{1}{2}(x\ud y-y\ud x)\in\Omega^1(\sphere{2n-1})$.
Therefore, Sandon asked the following question,
where $\contoc(V,\alpha)$ denotes the set of contactomorphisms isotopic to the identity.
\begin{question}\label{q:zoll}
    Given a closed contact manifold endowed with a contact form $(V,\alpha)$,
    does every $\varphi\in\contoc(V,\alpha)$ satisfy (\ref{eq:Sandon})?
\end{question}
Similarly to the Arnol'd conjecture, one can ask weaker estimates on the number
of translated points, using the cup-length estimate or the category estimate
of the Lusternik-Schnirelmann theory.
Another variation of Question~\ref{q:zoll} can be asked for the
non-degenerate contactomorphisms (see below), replacing the estimate on the minimal
number of critical points of any $f:M\to\R$ by an estimate on the
minimal number of critical points a Morse map can have.
As mentioned earlier, Sandon proved that Question~\ref{q:zoll} is true for
$(\RP^{2n-1},\alpha_0)$ \cite{San13}.
We proved that this is also the case for every quotient $L^{2n-1}_k(w)$ of
$(\sphere{2n-1},\alpha_0)$
by free $\Z/k\Z$-actions of the form $(z_j)\mapsto (e^{2i\pi w_j/k}z_j)$
(with $k\geq 2$) in \cite{minimalLens}, improving an estimate of
Granja-Karshon-Pabiniak-Sandon \cite{GKPS}.
The question was also positively answered by the work of Albers-Fuchs-Merry \cite{AFM15}
completed by Meiwes-Naef \cite{MN18} in the case of hypertight contact manifolds
(\emph{i.e.} such that all Reeb orbits are non-contractible for some contact form
supporting the contact structure) for generic contactomorphisms
with a Morse estimate.

This first question seems better suited for contact manifolds
all of whose Reeb orbits are closed: the so-called Besse contact manifolds.
A result of Granja-Karshon-Pabiniak-Sandon suggests a second question
that could cover a greater class of contact manifolds.
\begin{question}\label{q:general}
    Given a closed contact manifold endowed with a contact form
    $(V,\alpha)$, does every $\varphi\in\contoc(V,\alpha)$ possess a sequence
    of couples $(p_n,t_n)\in V\times \R$ with $t_n\to+\infty$ such that
    $p_n$ is a translated point of $\varphi$ with time-shift $t_n$?
\end{question}
Let us remark that the Weinstein conjecture is satisfied by the $(V,\alpha)$'s
satisfying the conclusion of Question~\ref{q:general}
(by taking $\varphi=\phi_1^\alpha$).
Let us further remark that a Besse contact manifold answering Question~\ref{q:zoll}
positively also answers Question~\ref{q:general} positively.
Granja-Karshon-Pabiniak-Sandon answered positively Question~\ref{q:general} for
$(L^{2n-1}_k(w),\alpha)$, $\alpha$ being any contact form supporting
the same contact structure as the quotient of $\alpha_0$.

In this paper, we want to give motivations that Question~\ref{q:zoll}
should be answered positively for unit tangent bundles of
Riemannian manifolds, all of whose geodesics are closed and of same prime length
whereas Question~\ref{q:general} should be answered positively
for every unit tangent bundle.
In order to do so, we will prove that this is indeed the case 
in a weak sense for
a subclass of contactomorphisms of $SM$: the contactomorphic
lift of the diffeomorphisms of $M$.

Let $M$ be a Riemannian manifold, its unit tangent bundle $SM$
is a contact manifold for the contact form $\alpha$:
\begin{equation*}
    \alpha_{(x,v)}\cdot\xi = \langle v, \ud\pi_x\cdot\xi\rangle,
    \quad \forall (x,v)\in SM,\forall \xi\in T_{(x,v)}SM,
\end{equation*}
where $\pi:SM\to M$ is the bundle map and $\langle\cdot,\cdot\rangle$
is the Riemannian metric.
Given a diffeomorphism $f:M\to M$,
we denote $\tilde{f}:SM\to SM$ the associated contactomorphism
\begin{equation*}
    \tilde{f}(x,v) := \left(f(x), 
    \frac{\ud f_x^{-T} \cdot v}{\|\ud f_x^{-T} \cdot v\|}\right),\quad
    \forall (x,v)\in SM,
\end{equation*}
where $\ud f^{-T}$ denotes the inverse of the adjoint $\ud f^T$.
We will study the minimal number of translated points of $\tilde{f}$
for $f$ homotopic to the identity.
The notion of translated points of $\tilde{f}$ with time-shift $t$
can be naturally generalized to smooth maps $f:M\to M$ 
(and not only diffeomorphisms) in such a way
that, in particular, a translated point of a diffeomorphism $f:M\to M$
with time-shift
$t$ is exactly a translated point of the contactomorphism
$\tilde{f}$ with time-shift $t$.
\begin{definition}
    A point $(x,v)\in SM$ is a translated point of $f:M\to M$
    with time-shift $t\neq 0$ if their exists a geodesic $\gamma:[0,1]\to\R$
    of length $|t|$ such that
    \begin{equation*}
        \begin{cases}
            \dot{\gamma}(0)=v\|\dot{\gamma}(0)\| \text{ and }
            \ud f_x^T\cdot\dot{\gamma}(1)=\dot{\gamma}(0)
            & \text{ when } t>0,\\
            \dot{\gamma}(1)=v\|\dot{\gamma}(1)\| \text{ and }
            \ud f_x^T\cdot\dot{\gamma}(0)=\dot{\gamma}(1)
            & \text{ when } t<0;
        \end{cases}
    \end{equation*}
    it is a translated point with time-shift $t=0$ if $\ud f_x^T\cdot v=v$
    (in particular, $f(x)=x$).
\end{definition}
The correspondence between translated points of $\tilde{f}$
and translated points of $f$ is due to the fact that the Reeb flow of $SM$
is the geodesic flow (see \emph{e.g.} \cite[\S 1.3.3]{Pat99}).
We give the following partial answer to Question~\ref{q:general}.

\begin{thm}\label{thm:general}
    Let $M$ be a closed Riemannian manifold that has a finite cover $\widetilde{M}$,
    the singular homology group $H_*(\Lambda \widetilde{M})$
    of the free loop space of which
    is not finitely generated.
    Every smooth map $f:M\to M$ homotopic to the identity admits a sequence
    of couples $(p_n,t_n)\in SM\times(0,+\infty)$ with $t_n\to +\infty$
    such that $p_n$ is a translated point of $f$ with time-shift $t_n$.
\end{thm}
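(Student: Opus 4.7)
The strategy is to translate the search for translated points of $f$ into a critical-point problem for an energy functional on a twisted free loop space, and then to use the unbounded homology hypothesis to extract a sequence of critical values that tend to infinity. First, I reduce to the case $M=\widetilde{M}$: since $f\simeq\id_M$, the homotopy lifts along the finite covering $\widetilde{M}\to M$ to a homotopy from some lift $\widetilde{f}$ to $\id_{\widetilde{M}}$, so $\widetilde{f}$ exists and is also homotopic to the identity. Any translated point of $\widetilde{f}$ with time-shift $t$ projects to a translated point of $f$ with the same time-shift (the covering is a local isometry), so it is enough to prove the theorem under the assumption that $H_*(\Lambda M)$ itself is not finitely generated.

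I then introduce the Hilbert manifold
\begin{equation*}
    \Lambda^f M:=\{\gamma\in W^{1,2}([0,1],M)\ |\ \gamma(1)=f(\gamma(0))\}
\end{equation*}
and the energy functional $E(\gamma):=\tfrac{1}{2}\int_0^1\|\dot\gamma\|^2\,\ud t$. Viewing $\Lambda^f M$ as the pullback of the two-endpoint evaluation fibration $W^{1,2}([0,1],M)\to M\times M$ along the graph of $f$, the homotopy $f\simeq\id_M$ induces a homotopy equivalence $\Lambda^f M\simeq \Lambda M$, so $H_*(\Lambda^f M)$ is also not finitely generated. A standard first-variation computation with test variations $\xi$ along $\gamma$ constrained by $\xi(1)=\ud f_{\gamma(0)}\cdot\xi(0)$ identifies the non-constant critical points of $E$ with geodesics $\gamma$ satisfying $\gamma(1)=f(\gamma(0))$ and the boundary relation $\ud f_{\gamma(0)}^T\cdot\dot\gamma(1)=\dot\gamma(0)$: these are precisely the translated points of $f$ with positive time-shift $t=\|\dot\gamma(0)\|=\sqrt{2E(\gamma)}$.

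The energy $E$ satisfies the Palais--Smale condition on $\Lambda^f M$ (the classical argument for $\Lambda M$ goes through, using compactness of $M$ and the fact that the twisted boundary condition is preserved by $W^{1,2}$-limits), so Lusternik--Schnirelmann theory produces, for every non-zero $\alpha\in H_*(\Lambda^f M)$, a positive critical value
\begin{equation*}
    c(\alpha):=\inf\{c\geq 0\ |\ \alpha\in\im(H_*(\{E\leq c\})\to H_*(\Lambda^f M))\},
\end{equation*}
and hence a translated point with time-shift $\sqrt{2c(\alpha)}$. The main obstacle is producing a sequence $(\alpha_n)$ for which $c(\alpha_n)\to+\infty$. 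I overcome it via the following finiteness statement: for every $c>0$, the sublevel set $\{E\leq c\}\subset\Lambda^f M$ has the homotopy type of a finite CW complex. Concretely, for $N$ large enough depending only on $c$ and the injectivity radius of $M$, the piecewise-geodesic replacement $\gamma\mapsto(\gamma(0),\gamma(1/N),\ldots,\gamma((N{-}1)/N),f(\gamma(0)))$ is a well-defined deformation retraction of $\{E\leq c\}$ onto the compact subset of $\{(x_0,\ldots,x_{N-1},f(x_0))\in M^{N+1}\}\cong M^N$ cut out by the energy constraint. This subset is finite-dimensional, so $H_*(\{E\leq c\})$ is finitely generated; as $H_*(\Lambda^f M)$ is not, one can inductively choose classes $\alpha_n$ that are not in the image of $H_*(\{E\leq n\})\to H_*(\Lambda^f M)$. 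The corresponding critical values satisfy $c(\alpha_n)\geq n$, producing translated points $(p_n,t_n)$ of $f$ with $t_n=\sqrt{2c(\alpha_n)}\to+\infty$.
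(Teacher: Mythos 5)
Your overall strategy coincides with the paper's: reduce to the finite cover, identify translated points with positive time-shift with the critical points of the energy on the twisted path space $\Lambda(f)$, use $f\simeq\id$ to transport the non-finitely-generated homology of $\Lambda M$ to $\Lambda(f)$ (your pullback-of-a-fibration argument replaces the paper's explicit concatenation maps $\tau(f_s)$, which is fine), and then force critical values to tend to infinity by a finiteness property of sublevel sets. The gap is in that last finiteness step. You claim that $\{E\le c\}$ deformation retracts onto the compact broken-geodesic subset of $M^N$ cut out by the energy constraint and conclude that $H_*(\{E\le c\})$ is finitely generated because that subset is finite-dimensional. That inference is false in general: the broken-geodesic model is a sublevel set $\{g\le c\}$ of a smooth function $g$ on a compact finite-dimensional manifold, and when $c$ is a critical value of $g$ such a set is merely compact; it need not be a finite CW complex nor have finitely generated singular homology (a smooth nonnegative function vanishing exactly on a Cantor set has $\{g\le 0\}$ with infinitely generated $H_0$). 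Since you take $c=n$ with no control on whether $n$ is a regular value of the energy, the step ``finite-dimensional, hence finitely generated'' is unjustified, and it is exactly the delicate point of the argument.

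This is why the paper's Lemma~\ref{lem:inclusionfinite} is stated and proved differently: it asserts only that the \emph{image} of $H_*(\Lambda_f^{\leq\lambda})\to H_*(\Lambda_f)$ is finitely generated, and obtains this by factoring through a sublevel $\Lambda_f^{\leq K}$ at a regular value $K$, passing to the finite-dimensional broken-geodesic model, and then $C^0$-perturbing $g$ to a Morse map so that the image factors through a sublevel with finitely generated homology. Your argument can be repaired along the same lines: either insert such a sandwich through a Morse perturbation, or observe that if $E_f$ has no regular values above some level then its critical values already tend to $+\infty$ and the theorem holds, while otherwise you may take your thresholds to be regular values $K_n\to+\infty$, for which the broken-geodesic sublevel is a compact manifold with boundary and hence has finitely generated homology; only the (finitely generated) image of $H_*(\{E\le K_n\})$ in $H_*(\Lambda(f))$ is needed to choose the classes $\alpha_n$. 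With that correction the remaining steps of your proposal (Palais--Smale, the minimax values $c(\alpha_n)\ge K_n$ being critical values, the correspondence with translated points, and the covering reduction) go through as in the paper.
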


The assumption on $M$ that $H_*(\Lambda \widetilde{M})$ is not finitely generated
is satisfied by a large class of closed Riemannian manifolds
(in fact, the author does not know if there are counter-examples).
On the one hand, when the number of conjugacy classes of $\pi_1(M)$ is infinite,
the group $H_0(\Lambda M)$ is already not finitely generated.
On the other hand, when $\pi_1(M)$ is finite,
Vigué-Poirrier and Sullivan proved that $H_*(\Lambda \widetilde{M})$ is not finitely
generated for the universal cover $\widetilde{M}$ \cite{VPS76}.
Therefore, the only possible counter-examples are among the closed manifolds
that have an infinite fundamental group with a finite number of conjugacy classes.

Our most satisfying answer to Question~\ref{q:zoll} concerns the non-degenerate
case. Let us first  discuss the notion of non-degenerate translated points.
Let $(G_t)$ be the geodesic flow of $TM$ and, for a diffeomorphism $f:M\to M$,
let $\hat{f}:TM\to TM$ be its symplectic lift $\hat{f}(x,v) = (f(x),\ud f^{-T}\cdot v)$.
In our particular case, a translated point $p\in SM$ of a contactomorphic
lift $\tilde{f}$ of a diffeomorphism $f:M\to M$ is non-degenerate
for its time-shift $t\in\R$ if and only if $\ud(G_{-t}\circ \hat{f})(p)$
does not have $1$ as an eigenvalue.
One can check that this definition coincides with the definition given
by Sandon in the general setting \cite{San13}.
Similarly to the definition of translated points,
one can extend this notion to any smooth map $M\to M$ (for simplicity,
we only give the definition for positive time-shifts).

\begin{definition}\label{def:nondegenerate}
    A translated point $(x,v)$ of $f:M\to M$ is non-degenerate
    for the time-shift $t>0$ if the subspace of the Jacobi fields $J$
    along the associated geodesic $\gamma$ satisfying
    \begin{equation*}
        \begin{cases}
            J(1) = \ud f\cdot J(0),\\
            (\ud^2 f\cdot J(0))^T \cdot \dot{\gamma}(1) =
            \dot{J}(0) - \ud f_x^T\cdot \dot{J}(1),
        \end{cases}
    \end{equation*}
    is reduced to $0$
    (the linear morphism $\ud^2 f\cdot J(0)$ denotes
    $u\mapsto \ud^2 f[J(0),u]$, see Section~\ref{se:nondegeneracy}).
\end{definition}

The equivalence of both definitions in the case of diffeomorphisms
is proven in Proposition~\ref{prop:nondegeneracy}.
We give the following partial answer to Question~\ref{q:zoll}
in the non-degenerate case.
Let us recall that
a Riemannian manifold $M$ is called Zoll (of length $\ell$)
if all its geodesics are closed and of the same prime length (equal to $\ell$).

\begin{thm}\label{thm:zoll}
    Let $M$ be a closed Zoll Riemannian manifold
    and let $R:=\Z$ if $M$ is orientable
    and $R:=\Z/2\Z$ otherwise.
    For every smooth map $f:M\to M$ homotopic to the identity with finitely
    many translated points in $SM$ all of which are non-degenerate,
    the number of translated points is not less than
    $\sum_j \beta_j(SM;R)$, where $\beta_j(SM;R) = \operatorname{rank} H_j(SM;R)$
    denotes the $j$-th Betti number of $SM$.
\end{thm}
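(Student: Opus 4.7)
The plan is to realize translated points of $f$ as critical points of an energy functional on a twisted path space, use the Zoll hypothesis to endow the corresponding free loop space with a Morse--Bott decomposition whose critical submanifolds are copies of $SM$, and then obtain the bound $\sum_j \beta_j(SM; R)$ by reducing the Zoll tower of critical points to a single energy window via the Chas--Sullivan product. Concretely, I would first work on the twisted path space $\mathcal{P}_f := \{\gamma \in H^1([0,1], M) : \gamma(1) = f(\gamma(0))\}$ equipped with the energy functional $E(\gamma) = \tfrac{1}{2}\int_0^1 |\dot\gamma|^2 \ud t$. A direct first variation calculation with the twisted boundary condition shows that critical points of $E$ of positive energy $t^2/2$ are in bijection with translated points $(x,v) \in SM$ of time-shift $t>0$, and the second variation, expressed in terms of Jacobi fields, matches Definition~\ref{def:nondegenerate} for non-degeneracy. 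Translated points of non-positive time-shift contribute either via the zero-energy component (fixed points of $f$ with $\ud f^T$-invariant direction) or symmetrically via orientation reversal.

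Next, I would exploit the homotopy $f \simeq \id$ to identify $\mathcal{P}_f \simeq \Lambda M$. For Zoll $M$ of prime length $\ell$, the energy $E$ on $\Lambda M$ is classically Morse--Bott with critical submanifolds: the constant loops (a copy of $M$) at level $0$, and for each integer $k \geq 1$ a single copy of $SM$ at level $(k\ell)^2/2$ parametrizing the closed geodesics of length $k\ell$ by their initial velocity. Each translated point thus lifts to a $\Z$-indexed tower of critical points of $E$, one per time-shift $t+k\ell$ obtained by prepending complete Reeb loops. To turn the Morse--Bott data into a lower bound for the number of \emph{translated points} rather than the infinite number of critical points of $E$, I would invoke the Chas--Sullivan product on $H_*(\Lambda M; R)$ together with Bott-type iteration: multiplication by the fundamental class of the minimal closed geodesic implements the shift between consecutive towers, so the contribution of a single energy window of width $\ell$ can be identified, up to index shift, with $H_*(SM; R)$.

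With the per-window model at hand, the standard Morse--Bott-to-Morse comparison applied to the non-degenerate perturbation produced by $f \neq \id$ yields at least $\sum_j \beta_j(SM; R)$ distinct $\Z$-orbits of critical points, hence at least $\sum_j \beta_j(SM; R)$ translated points. The coefficient ring $R = \Z$ when $M$ is orientable and $R = \Z/2\Z$ otherwise is forced by orientability of the critical submanifold $SM$: this orientability is inherited from $M$ and is required for the integer Morse inequalities, while the non-orientable case demands $\Z/2$-coefficients.

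The main obstacle is the Chas--Sullivan reduction in the second paragraph: decoupling the Zoll tower of critical points without any $C^1$-smallness hypothesis on $f$, while tracking enough Morse--Bott data to recover the full Betti sum of $SM$ in a single window. This requires a careful interplay between the loop product and the energy filtration on $H_*(\Lambda M; R)$, ensuring that the non-degenerate perturbation cannot collapse the per-window contribution to a proper subquotient of $H_*(SM; R)$.
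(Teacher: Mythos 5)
Your overall strategy (the twisted path space $\Lambda(f)$ with its energy functional, the homotopy equivalence with $\Lambda M$, the Zoll Morse--Bott structure with critical manifolds $SM$ at levels $k\ell$, and a Chas--Sullivan mechanism to reduce the infinite tower to one energy window) is indeed the paper's outline. But the step on which everything hinges is missing: you invoke ``the standard Morse--Bott-to-Morse comparison applied to the non-degenerate perturbation produced by $f\neq\id$''. There is no perturbation here. The map $f$ is only homotopic to the identity; $E_f$ on $\Lambda(f)$ is not $C^1$-close (nor close in any controlled sense) to $E_{\id}$ on $\Lambda M$, and the identification $\tau(f_s):\Lambda M\to\Lambda(f)$ only respects the energy filtration up to an additive error $\delta(f_s)$ that can be arbitrarily large compared with $\ell$. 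So the local Morse--Bott perturbation picture, and with it the claim that ``the contribution of a single energy window of width $\ell$ can be identified with $H_*(SM;R)$'' for $E_f$, has no justification; you yourself flag this as ``the main obstacle'' but offer no mechanism to overcome it, which means the proof does not close.

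The paper's resolution is a quantitative min-max argument rather than a perturbative one. One defines spectral invariants $c(\alpha,f)$ from the sublevel filtration, proves a $\delta$-interleaving between the filtered homologies of $\Lambda M$ and $\Lambda(f)$ (Corollary~\ref{cor:taufs}), and uses Goresky--Hingston's Theorem~\ref{thm:GH} to get the exact identity $c(\Theta^k*\alpha,\id)=c(\alpha,\id)+k\ell$ (Corollary~\ref{cor:cThetakid}). Combining the two yields $c(\Theta^{k+1}*\tau_*\alpha,f)=c(\Theta^k*\tau_*\alpha,f)+\ell-\varepsilon_k$ with $\sum_k\varepsilon_k\leq 2\delta$ (Proposition~\ref{prop:cThetaf}). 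Now the finiteness hypothesis on translated points makes the positive critical value set of $P_f$ discrete and invariant under $t\mapsto t+\ell$, forcing $\varepsilon_k=0$ for all large $k$; hence for $A$ large each of the $N=\sum_j\beta_j(SM;R)$ independent classes $\Theta^{k_i}*\tau_*\alpha_i$ (independence coming from Theorem~\ref{thm:GH} and the perfectness of $E_{\id}$) has its min-max value in a single window $(A,A+\ell]$, and the Morse inequalities for the non-degenerate critical points in that window give at least $N$ critical points, each translated point contributing exactly one time-shift there. Without this asymptotic synchronization of spectral values at high energy (or an equivalent device), your per-window reduction cannot be carried out for $E_f$, so the key second and third paragraphs of your proposal remain a statement of the difficulty rather than a proof.
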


In the degenerate case, our result is less satisfying as it requires
a $C^0$-closeness assumption.
Let us recall that the cup-length $CL(X;R)\in\N$ of a space $X$
is the maximal $k$ such that $u_1\smile\cdots\smile u_k\neq 0$
for some non-zero $u_j\in H^*(X;R)$ of positive degree.
By a $H^1$-homotopy $(f_s)$ of maps $M\to M$, we will mean that
$t\mapsto f_t(x)$ is in the Sobolev space $H^1([0,1],M)$
for all $x\in M$.
The set of time-shifts of $f:M\to M$ will denote the set of $t\in\R$
that are time-shifts of a translated point;
when $M$ is Zoll of length $\ell$, this set is $\ell\Z$-invariant
so it can be seen as a subset of $\R/\ell\Z$.

\begin{thm}\label{thm:zolldegenerate}
    Let $M$ be a closed Zoll Riemannian manifold of length $\ell$
    and let $R:=\Z$ if $M$ is orientable
    and $R:=\Z/2\Z$ otherwise.
    Let $(f_s)$ be a $H^1$-homotopy of maps $M\to M$ such that $f_0=\id$ and
    \begin{equation*}
        \int_0^1 \|\partial_t f_t(x)\|^2\ud t < \left(\frac{\ell}{2}\right)^2,\quad
        \forall x\in M.
    \end{equation*}
    If the number of time-shifts of $f_1$ seen in $\R/\ell\Z$ is
    less than $1+CL(SM;R)$, then $f_1$ has infinitely many translated points.
    In particular,
    the number of translated points of $f_1$ is not less than
    $1+CL(SM;R)$.
\end{thm}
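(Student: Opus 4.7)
My plan is a Lusternik--Schnirelmann min-max argument with cup-length $CL(SM;R)$, applied to the energy functional on a twisted loop space whose critical points correspond to the translated points of $f_1$.

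First, I work on the Hilbert submanifold $\Lambda_{f_1}:=\{\gamma\in H^1([0,1],M):\gamma(1)=f_1(\gamma(0))\}$ of $H^1([0,1],M)$, which is homotopy equivalent to the free loop space $\Lambda M$ via the isotopy $(f_s)$. A first-variation computation for the energy $E(\gamma):=\tfrac12\int_0^1|\dot\gamma|^2\,\mathrm{d}t$ on $\Lambda_{f_1}$ identifies the critical points with geodesics $\gamma$ from $x=\gamma(0)$ to $f_1(x)$ satisfying $\mathrm{d}f_{1,x}^{T}\dot\gamma(1)=\dot\gamma(0)$, that is, with translated points of $f_1$ of positive time-shift $t=|\dot\gamma|$, at critical value $t^2/2$. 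By Cauchy--Schwarz, the assumption $\int_0^1\|\partial_tf_t(x)\|^2\,\mathrm{d}t<(\ell/2)^2$ implies $d(x,f_1(x))<\ell/2$; hence I can localize to an action window $W=(a,b)$ whose corresponding time-shift interval has length strictly less than $\ell$ and captures exactly one time-shift per translated point that has first time-shift in this interval. Within $W$, two time-shifts of the same $(x,v)\in SM$ would differ by a nonzero multiple of $\ell$, impossible; so distinct critical values of $E|_W$ correspond to distinct translated points in $SM$.

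Since $M$ is Zoll, the unperturbed energy on $\Lambda M$ is Morse--Bott with critical set at level $\ell^2/2$ diffeomorphic to $SM$ (via the initial unit tangent vector). The Bott--Ziller description of Zoll loop-space topology, together with the Thom isomorphism for the negative normal bundle of this critical submanifold (whose orientability explains the coefficient ring $R$), provides cohomology classes $u_1,\dots,u_N\in H^*(\Lambda_{f_1}^{<b};R)$, with $N=CL(SM;R)$, that vanish on $\Lambda_{f_1}^{<a}$ and whose cup-product $u_1\smile\cdots\smile u_N$ is nonzero. I define the min-max values
\[
c_i:=\inf\bigl\{\sup\nolimits_A E\,:\,A\subset\Lambda_{f_1}^{<b},\;(u_1\smile\cdots\smile u_{i-1})|_A\ne 0\bigr\},\quad 1\le i\le 1+N.
\]
Combined with the Palais--Smale condition for $E$, standard Lusternik--Schnirelmann theory yields critical values $a<c_1\le\cdots\le c_{1+N}<b$ inside $W$; and if some $c_i=c_{i+1}$ the critical set at this level has Lusternik--Schnirelmann category $\ge 2$ in the path-connected Hilbert manifold $\Lambda_{f_1}$, hence is infinite (any finite subset of a path-connected manifold has category $1$). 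By the windowing, either all $c_i$'s are distinct, yielding $1+N$ translated points in $SM$ of pairwise distinct time-shift classes modulo $\ell$, or some coincide, yielding infinitely many translated points at a common time-shift. If the number of time-shift classes of $f_1$ is $<1+N$, the first alternative would produce too many classes and is excluded, proving the theorem.

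The principal obstacle is the topological content of the min-max step: producing the cup-length classes on $\Lambda_{f_1}^{<b}$ relative to $\Lambda_{f_1}^{<a}$ requires careful use of the Bott--Ziller analysis of Zoll loop spaces, the Morse--Bott structure of the unperturbed energy, and the Thom isomorphism for the negative normal bundle of the critical manifold $SM$ (where orientability pins down the coefficient ring $R$). A secondary quantitative difficulty is propagating the $(\ell/2)^2$ bound through the perturbation to guarantee that the critical values produced by min-max remain trapped inside the window $W$.
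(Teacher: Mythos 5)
Your overall strategy (Lusternik--Schnirelmann min-max with cup-length in an action window of length at most $\ell$, plus the dichotomy ``distinct values give distinct time-shift classes mod $\ell$ / equal values give an infinite critical set'') is the same as the paper's, and your endgame matches its use of the Lusternik--Schnirelmann theorem. But there is a genuine gap at the step you yourself flag as ``the principal obstacle'': you never construct the classes $u_1,\dots,u_N$ on $\Lambda_{f_1}^{<b}$ vanishing on $\Lambda_{f_1}^{<a}$ with non-zero cup product, nor do you show that the resulting min-max values are trapped in a window of length at most $\ell$ and bounded away from the constant-level. This is precisely the content of the theorem. Appealing to the Morse--Bott structure and the Thom isomorphism for the negative bundle of the critical manifold $SM$ does not work as stated, because that structure belongs to the \emph{unperturbed} energy $E_{\id}$ on $\Lambda M$, while your min-max is run on $E_{f_1}$ on $\Lambda(f_1)$: the homotopy equivalence $\Lambda(f_1)\simeq\Lambda M$ induced by $(f_s)$ does not preserve the energy filtration, so topology at level $\ell$ for $E_{\id}$ does not automatically produce relative classes between perturbed sublevel sets. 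What is needed, and what the paper supplies, is a \emph{quantitative} comparison: the concatenation maps $\tau(f_s):\Lambda^{\leq\lambda}\to\Lambda_{f_1}^{\leq\lambda+\delta}$ with $\delta(f_s)<\ell/2$ (a $\delta$-interleaving of the filtered homologies), applied to the cap-product--subordinated classes $\alpha_i=\Theta\frown(u_1\smile\cdots\smile u_i)$ whose unperturbed min-max values all equal $\ell$ (this uses the perfectness of the Zoll energy and the splitting $H_*(\Lambda^{\leq\ell})\simeq H_*(\Lambda^{\leq 0})\oplus H_{*-\lambda_1}(\Sigma_1)$ to define $\Theta$ and to lift the cup-length classes of $SM$). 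Only then does one get $\ell/2<c(\tau_*\alpha_N,f_1)\leq\cdots\leq c(\tau_*\alpha_0,f_1)<3\ell/2$, which is the windowing your argument presupposes.

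A related quantitative slip: from the hypothesis you only extract $d(x,f_1(x))<\ell/2$ via Cauchy--Schwarz, but this pointwise bound is weaker than what is actually used. The hypothesis bounds $\delta(f_s)=\sup_x\sqrt{E(t\mapsto f_t(x))}$ by $\ell/2$, i.e.\ the $H^1$-energy of the traces of the homotopy, and it is this quantity (not the displacement of $f_1$) that controls the shift of min-max values under concatenation and hence keeps them strictly inside $(\ell/2,3\ell/2)$. Without the interleaving mechanism there is no way to ``propagate the $(\ell/2)^2$ bound'' to the min-max values, so the two difficulties you list as secondary are in fact the heart of the proof and remain open in your sketch.
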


The only closed Zoll Riemannian manifolds known by the author are diffeomorphic
to the compact rank-one symmetric spaces:
$\sphere{n}$, $\RP^n$, $\CP^n$, $\HP^n$ and $\CaP^2$
(see however \cite{Ber77} for examples of manifolds with exotic
structures all of whose geodesics starting from a special point
go back to this point at the same length).
According to a result of Bott and Samelson, every Zoll Riemannian
manifold has a cohomology ring isomorphic to the cohomology ring
of one of these spaces \cite{Bot54,Sam63}.
Let us refer to \cite{Bes78} for a comprehensive introduction to
the theory of Zoll and Besse Riemannian manifolds.
Let us also point out that
the study of Besse Riemannian manifolds and more generally of Besse
contact forms 
from the variational viewpoint has recently
known significant advances \cite{AB19,GGM21,MS21}.

The proofs of this article are based on a rather classical variational
principle that goes back to Grove \cite{Gro73}.
Indeed, we remark in Section~\ref{se:defvariational} that
geodesics corresponding to translated points of positive time-shift
are exactly the critical points of the restriction of the energy functional
to paths $\gamma:[0,1]\to M$ such that $f(\gamma(0))=\gamma(1)$.
This variational principle was initially applied in the specific case
where $f$ is an isometry in order to study isometry-invariant geodesics
\cite{Gro73,GT78,Maz14,Maz15,Ban16,MM17}.

Theorem~\ref{thm:general} is a direct application of Morse theory.
Theorems~\ref{thm:zoll} and \ref{thm:zolldegenerate} are more
subtle consequences of Morse and Lusternik-Schnirelmann theories.
In order to translate the symmetry $(p,t)\mapsto (p,t+\ell)$
of the set of couples (translated point, associated time-shift),
we make use of the Chas-Sullivan product \cite{CS04}.
More precisely, we apply the results of Goresky-Hingston
concerning the product-structure of the homology groups
of the free loop space of Zoll Riemannian manifolds \cite{GH09}.

\subsection*{Organization of the paper}
In Section~\ref{se:variational}, we study the variational principle
satisfied by translated points of positive time-shifts
and prove Theorem~\ref{thm:general}.
In Section~\ref{se:zoll}, we study the special case of Zoll Riemannian
manifolds and prove Theorems~\ref{thm:zoll} and \ref{thm:zolldegenerate}.

\subsection*{Acknowledgment}
I am grateful to Margherita Sandon and to my former advisor Marco
Mazzuchelli for their supports and fruitful discussions and suggestions.

\section{The variational principle}
\label{se:variational}

\subsection{The variational principle}
\label{se:defvariational}

Let $M$ be a closed Riemannian manifold.
We will denote $PM := H^1([0,1],M)$ the path space of $M$
endowed with its usual structure of Hilbert manifold
(here $H^1$ denotes the Sobolev space also denoted $W^{1,2}$).
Given $f:M\to M$, let
\begin{equation*}\label{eq:Lf}
    \Lambda(f) := \left\{ \gamma\in PM |\
        \gamma(1) = f(\gamma(0)) \right\};
\end{equation*}
in particular $\Lambda(\id)=\Lambda M$ is the free loop space of $M$.
We will also use the notation $\Lambda_f$ for $\Lambda(f)$ and
$\Lambda$ for $\Lambda M$.
This space was already introduced in \cite{Gro73} as $\Lambda_{G(f)} M$,
we refer to this article for details in the properties recalled here.
The space $\Lambda(f)$ is a Hilbert-submanifold as
$\gamma\mapsto (\gamma(0),\gamma(1))$ is a submersion.
Given $\gamma\in\Lambda(f)$, a vector $U\in T_\gamma\Lambda(f)$
is a $H^1$-vector field along $\gamma$ satisfying
$\ud f\cdot U(0) = U(1)$.
Let us denote $E:PM \to\R$ the energy functional,
\begin{equation*}\label{eq:E}
    E(\gamma) := \int_0^1 \|\dot{\gamma}\|^2 \ud t,
\end{equation*}
and $E_f : \Lambda(f)\to\R$ the restriction to $\Lambda(f)$
for $f:M\to M$.
According to \cite[Theorem~2.4]{Gro73}, the functional $E_f$ satisfies the
Palais-Smale condition.

Let us denote $\nabla$ the Levi-Civita covariant derivative of $M$.
Taken along a curve $t\mapsto c(t)$, the covariant derivative of the vector field $X$
will be either denoted $\nabla_{\dot{c}} X$,
$\frac{DX}{\partial t}$ or $\dot{X}$ (this last notation
is reserved to the time variable $t$).
By convention, a geodesic will always mean a geodesic with constant speed:
$\gamma\in PM$ is a geodesic if and only if
\begin{equation*}
    \nabla_{\dot{\gamma}}\dot{\gamma} = \ddot{\gamma} = 0.
\end{equation*}

\begin{prop}
    Given $f:M\to M$, $\gamma\in\Lambda(f)$ is a critical point of
    $E_f$ if and only if $\gamma$ is a geodesic and
    $\ud f_{\gamma(0)}^\bot\cdot \dot{\gamma}(1) = \dot{\gamma}(0)$.
\end{prop}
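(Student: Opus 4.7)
The plan is to apply the first variation formula for the energy functional on the path space and then impose the boundary constraint that characterizes $T_\gamma\Lambda(f)$.

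First I would compute $\ud E_\gamma \cdot U$ for $U \in T_\gamma\Lambda(f)$. By the usual formula for the energy on $PM$,
\begin{equation*}
    \ud E_\gamma\cdot U = 2\int_0^1 \langle \dot{\gamma}, \nabla_{\dot{\gamma}} U\rangle\ud t,
\end{equation*}
and integration by parts gives
\begin{equation*}
    \ud E_\gamma\cdot U = 2\langle\dot{\gamma}(1),U(1)\rangle - 2\langle\dot{\gamma}(0),U(0)\rangle - 2\int_0^1 \langle\ddot{\gamma},U\rangle\ud t.
\end{equation*}
The constraint for $U\in T_\gamma\Lambda(f)$ is $U(1)=\ud f_{\gamma(0)}\cdot U(0)$, so the boundary contribution becomes
\begin{equation*}
    \langle\dot{\gamma}(1),\ud f_{\gamma(0)}\cdot U(0)\rangle - \langle\dot{\gamma}(0),U(0)\rangle
    = \langle \ud f^T_{\gamma(0)}\cdot\dot{\gamma}(1) - \dot{\gamma}(0), U(0)\rangle.
\end{equation*}

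Next I would exploit this in two steps. Restricting to compactly supported $U$ in $(0,1)$ (which automatically lie in $T_\gamma\Lambda(f)$ since $U(0)=U(1)=0$), the boundary term vanishes and the vanishing of the interior integral for all such $U$ forces $\ddot{\gamma}=0$; that is, $\gamma$ is a geodesic. Once this is known, the interior integral is automatically zero for every $U\in T_\gamma\Lambda(f)$, so criticality reduces to the vanishing of the boundary term.

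Finally I would observe that the assignment $U\mapsto U(0)$ from $T_\gamma\Lambda(f)$ to $T_{\gamma(0)}M$ is surjective: given any $v\in T_{\gamma(0)}M$, one can construct $U$ by parallel transport along $\gamma$ multiplied by a suitable cutoff interpolating between $v$ at $t=0$ and $\ud f_{\gamma(0)}\cdot v$ at $t=1$. Hence the boundary term vanishes for all admissible $U$ iff $\ud f^T_{\gamma(0)}\cdot\dot{\gamma}(1)=\dot{\gamma}(0)$, which is the stated condition (using the notation $\ud f^\bot=\ud f^T$). There is no genuine obstacle in this argument; the only point requiring minor care is producing such a test variation $U$, and that is routine.
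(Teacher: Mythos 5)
Your proposal is correct and follows essentially the same route as the paper: first variation of the energy, integration by parts, the boundary identity $[\langle U,\dot\gamma\rangle]_0^1=\langle U(0),\ud f^T\cdot\dot\gamma(1)-\dot\gamma(0)\rangle$ coming from the constraint $U(1)=\ud f\cdot U(0)$, then testing first against variations vanishing at the endpoints to get $\ddot\gamma=0$ and then against general $U\in T_\gamma\Lambda(f)$ to get $\ud f^T\cdot\dot\gamma(1)=\dot\gamma(0)$ (the notation $\ud f^\bot$ in the statement is indeed the adjoint $\ud f^T$). Your explicit remark that $U\mapsto U(0)$ is surjective onto $T_{\gamma(0)}M$ is the only step the paper leaves implicit, and your cutoff/parallel-transport construction settles it.
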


\begin{proof}
    Let $\gamma\in\Lambda(f)$, let $U\in T_\gamma\Lambda(f)$ and
    let $(\gamma_u)$ be a smooth family in $\Lambda(f)$ such that
    $U=\partial_u\gamma_u$ (the derivative being taken at $u=0$).
    Then
    \begin{equation*}
        \frac{1}{2}\ud E(\gamma)\cdot U = \int_0^1
        \left\langle \frac{D}{\partial u}\dot{\gamma}_u,\dot{\gamma}\right\rangle\ud t
        = \int_0^1
        \left\langle \dot{U},\dot{\gamma}\right\rangle\ud t
        = \big[\langle U,\dot{\gamma}\rangle\big]_0^1 -
        \int_0^1
        \left\langle U,\ddot{\gamma}\right\rangle\ud t.
    \end{equation*}
    As $\ud f\cdot U(0) = U(1)$, one has
    \begin{equation*}
        \big[\langle U,\dot{\gamma}\rangle\big]_0^1 =
        \langle U(0),\ud f^T\cdot\dot{\gamma}(1) - \dot{\gamma}(0)\rangle.
    \end{equation*}
    The identity $\ud E_f(\gamma)\cdot U = 0$ for the vector fields along $\gamma$
    such that $U(0)=0$ implies that $\ddot{\gamma} = 0$, \emph{i.e.}
    $\gamma$ is a geodesic.
    The identity $\ud E_f(\gamma)\cdot U = 0$ for every $U\in T_\gamma\Lambda(f)$
    then implies that $\ud f^T\cdot\dot{\gamma}(1) = \dot{\gamma}(0)$
    (although we will often write $\ud f^T$ without mention of the base point
    of $M$ at which we take the adjoint, we point out that this notation
    can be ambiguous as $\ud f^T$ is define on $f^*TM$ rather than
    $TM$).
\end{proof}

\begin{cor}\label{cor:variational}
    For every diffeomorphism $f:M\to M$,
    critical points of $E_f$ with energy value $e>0$ are in bijection
    with translated points $(x,v)$ of $\tilde{f}$ with
    time-shift $\sqrt{e}$.
\end{cor}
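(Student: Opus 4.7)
The plan is to unpack both sides of the claimed bijection using the Proposition just proved together with the equivalence between translated points of $\tilde f$ and translated points of $f$ (as a smooth map), which the text attributes to the fact that the Reeb flow of $\alpha$ on $SM$ is the geodesic flow. By the Proposition, a critical point $\gamma\in\Lambda(f)$ of $E_f$ is a constant-speed geodesic satisfying $\gamma(1)=f(\gamma(0))$ and $\ud f_{\gamma(0)}^{T}\cdot\dot\gamma(1)=\dot\gamma(0)$. If $E(\gamma)=e>0$, then $\|\dot\gamma\|\equiv\sqrt{e}$, so the length of $\gamma$ equals $\sqrt{e}$ and $v:=\dot\gamma(0)/\sqrt{e}$ is a well-defined unit vector at $x:=\gamma(0)$.

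I would then define the forward map $\Psi(\gamma):=(x,v)$ and check that it sends a critical point of $E_f$ of energy $e>0$ to a translated point of $f$ with time-shift $\sqrt{e}$. Substituting $\dot\gamma(0)=\|\dot\gamma(0)\|\,v$, the critical-point equations become $\gamma(1)=f(x)$ and $\ud f_x^T\cdot\dot\gamma(1)=\|\dot\gamma(0)\|\,v$ for a geodesic $\gamma$ of length $\sqrt{e}$, i.e.\ precisely the defining conditions of a translated point of $f$ with time-shift $t=\sqrt{e}$. Invoking the equivalence with the notion of translated point of $\tilde f$ then produces a translated point of $\tilde f$ with the same time-shift. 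For the inverse, given a translated point $(x,v)$ of $\tilde f$ with time-shift $t=\sqrt{e}>0$, the associated constant-speed geodesic $\gamma:[0,1]\to M$ of length $t$ satisfies $\gamma(0)=x$, $\dot\gamma(0)=tv$, and $\ud f_x^T\cdot\dot\gamma(1)=\dot\gamma(0)$; the matching of base points in $\ud f_x^T:T_{f(x)}M\to T_xM$ forces $\gamma(1)=f(x)$, so $\gamma\in\Lambda(f)$, and the Proposition immediately yields that $\gamma$ is a critical point of $E_f$ with energy $t^2=e$.

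The two constructions are mutual inverses because a geodesic is uniquely determined by its initial position and velocity, so $\gamma\leftrightarrow(\gamma(0),\dot\gamma(0))$ is a bijection on geodesics of fixed energy in $\Lambda(f)$. This argument is essentially bookkeeping; the only real care required is the reparametrization, keeping track of the fact that $\gamma$ lives on $[0,1]$ with speed $\sqrt{e}$ while the Reeb time-shift equals $\sqrt{e}$ (rather than $e$ or $1$), so that the scalar $\sqrt{e}$ relating $\dot\gamma(0)$ to $v$ unwinds consistently in both directions.
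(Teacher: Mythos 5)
Your proof is correct and matches the paper's intent: the corollary is stated there without proof precisely as the direct unpacking you carry out, combining the preceding Proposition (critical points of $E_f$ are geodesics with $\ud f^T\cdot\dot\gamma(1)=\dot\gamma(0)$), the definition of translated points of $f$, and the stated correspondence with translated points of $\tilde f$. Your bookkeeping of the reparametrization (constant speed $\sqrt{e}$ on $[0,1]$, hence length and time-shift $\sqrt{e}$) and of the base-point constraints forcing $\gamma(0)=x$, $\gamma(1)=f(x)$ is exactly the content being taken for granted.
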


As the correspondence suggests, we will rather be interested in the
values of the functional $P_f := \sqrt{E_f}$ (which is $C^1$ away from
$\{ E_f = 0\}$) as they correspond to time-shifts at critical points
with positive critical values.
For every $\lambda\geq 0$, let $\Lambda_{f}^{<\lambda}$ and
$\Lambda_{f}^{\leq\lambda}$ be the respective subsets $\{ P_f <\lambda\}$
and $\{ P_f \leq \lambda\}$ of $\Lambda(f)$;
when $\lambda^2$ is not a critical value of $E_f$, these
subsets are submanifolds (with boundary in the second case).

\subsection{The topology of $\Lambda(f)$}

Let us first extend the join of loops defined in \cite[\S 2.3]{GH09}
to the spaces $\Lambda(f)$, for $f:M\to M$ smooth, in the obvious way.
Let us define the submanifold
\begin{equation*}
    PM\times_M PM := \{ (\alpha,\beta)\in PM\times PM\ |\
        \alpha(1) = \beta(0) \}
\end{equation*}
as well as $\Lambda(f)\times_M\Lambda(g):=
(\Lambda(f)\times\Lambda(g))\cap (PM\times_M PM)$.
The concatenation $\phi : PM\times_M PM\to PM$ will denote the following continuous map:
\begin{equation*}
    \phi(\alpha,\beta)(t) :=
    \begin{cases}
        \alpha(\frac{t}{s}), & t\in [0,s] \\
        \beta(\frac{t-s}{1-s}), & t\in [s,1]
    \end{cases},\quad
    \text{where}\quad s = \frac{\sqrt{E(\alpha)}}{\sqrt{E(\alpha)}
        + \sqrt{E(\beta)}},
\end{equation*}
in the case $E(\alpha)=E(\beta)=0$, both paths $\alpha$ and $\beta$
are constant $\equiv p$ and one sets $\phi(\alpha,\beta)\equiv p$.
This map satisfies 
\begin{equation}\label{eq:Ephi}
\sqrt{E(\phi(\alpha,\beta))} = \sqrt{E(\alpha)} + \sqrt{E(\beta)}.
\end{equation}
The concatenation $\phi$ is associative:
for every $\alpha,\beta,\gamma\in PM$ such that
$\alpha(1)=\beta(0)$ and $\beta(1)=\gamma(0)$,
\begin{equation}\label{eq:phiassociative}
    \phi(\phi(\alpha,\beta),\gamma)=\phi(\alpha,\phi(\beta,\gamma)).
\end{equation}
By restriction, one gets a continuous map
$\phi:\Lambda(f)\times_M \Lambda(g) \to \Lambda(g\circ f)$ satisfying
$P_{g\circ f}(\phi(\alpha,\beta)) = P_f(\alpha) + P_g(\beta)$,
for every $f,g:M\to M$.

Let $(f_s)_{s\in[0,1]}$ be a $H^1$-homotopy of smooth maps $M\to M$,
\emph{i.e.} such that
$s\mapsto f_s(x)$ is in $PM$ for all $x\in M$.
It induces the following map $\tau(f_s):\Lambda(f_0)\to\Lambda(f_1)$,
\begin{equation*}
    \tau(f_s)(\alpha) := \phi\big(\alpha,t\mapsto f_t(\alpha(0))\big).
\end{equation*}
Let 
\begin{equation*}
\delta(f_s) := \sup_{x\in M} \sqrt{E(t\mapsto f_t(x))},
\end{equation*}
then $\tau(f_s) : \Lambda_{f_0}^{\leq \lambda} \to 
\Lambda_{f_1}^{\leq \lambda + \delta(f_s)}$ according
to (\ref{eq:Ephi}).
Following the same lines as \cite[Lemma~3.6]{Gro73}
for the more usual concatenation of continuous paths,
one proves the following counterpart for $\phi$.

\begin{lem}[{\cite[Lemma~3.6]{Gro73}}]\label{lem:taufs}
    Let $(f_s)_{s\in[0,1]}$ be a homotopy of smooth maps $M\to M$
    such that $s\mapsto f_s(x)$ is in $PM$ for all $x\in M$.
    Then $\tau(f_s)$ and $\tau(f_{1-s})$ are
    homotopy inverses.
\end{lem}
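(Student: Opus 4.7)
The plan is to adapt Grove's argument for the usual concatenation of continuous paths to the Hilbert-space concatenation $\phi$, exploiting its associativity (\ref{eq:phiassociative}) to reduce each composition $\tau(f_{1-s})\circ\tau(f_s)$ and $\tau(f_s)\circ\tau(f_{1-s})$ to a concatenation with an ``out-and-back'' loop that can then be shrunk to a constant loop.

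Concretely, I would first unwind the definitions. For $\alpha\in\Lambda(f_0)$, one has $\tau(f_s)(\alpha)=\phi(\alpha,t\mapsto f_t(\alpha(0)))$, a path still starting at $\alpha(0)$, so
\begin{equation*}
    \tau(f_{1-s})(\tau(f_s)(\alpha))=\phi\bigl(\phi(\alpha,t\mapsto f_t(\alpha(0))),\, t\mapsto f_{1-t}(\alpha(0))\bigr)
    =\phi\bigl(\alpha,\,\phi(t\mapsto f_t(\alpha(0)),\, t\mapsto f_{1-t}(\alpha(0)))\bigr)
\end{equation*}
using associativity. The inner concatenation is a loop based at $f_0(\alpha(0))=\alpha(1)$ traversing the path $s\mapsto f_s(\alpha(0))$ forwards and then backwards. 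I would then define a homotopy $H:[0,1]\times\Lambda(f_0)\to\Lambda(f_0)$ that contracts this out-and-back loop to a point by
\begin{equation*}
    H_u(\alpha):=\phi\bigl(\alpha,\,\phi(t\mapsto f_{ut}(\alpha(0)),\, t\mapsto f_{u(1-t)}(\alpha(0)))\bigr).
\end{equation*}
At $u=1$ this is exactly $\tau(f_{1-s})\circ\tau(f_s)(\alpha)$, while at $u=0$ the two inner paths are constantly equal to $f_0(\alpha(0))=\alpha(1)$, so by the convention in the definition of $\phi$ (the second factor has zero energy, forcing the reparametrisation parameter to be $s=1$) the result is the original path $\alpha$. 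The endpoint of $H_u(\alpha)$ is always $f_0(\alpha(0))$, so $H_u$ indeed takes values in $\Lambda(f_0)$. An entirely symmetric formula, contracting the out-and-back loop along $s\mapsto f_{1-s}(\beta(0))$, handles the other composition $\tau(f_s)\circ\tau(f_{1-s})\simeq\id_{\Lambda(f_1)}$.

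The main technical point—and probably the only real obstacle—is continuity of $H$ as a map $[0,1]\times\Lambda(f_0)\to\Lambda(f_0)$, i.e.\ jointly in $(u,\alpha)$ for the $H^1$-topology. This splits into two pieces: continuity of $\phi$ on $PM\times_M PM$, which is inherited from \cite{Gro73}, and continuity of the auxiliary map $(u,\alpha)\mapsto (t\mapsto f_{ut}(\alpha(0)))\in PM$, which follows from the $H^1$-regularity hypothesis on $(f_s)$ together with the fact that the evaluation $\alpha\mapsto\alpha(0)$ is continuous $PM\to M$. The delicate case is where $E(\alpha)$ or the inner loop's energy vanishes, since the reparametrisation parameter in $\phi$ is defined by a ratio of square roots of energies; but this is exactly the configuration that the explicit convention in the definition of $\phi$ is built to cover, and the asserted continuity of $\phi$ already subsumes it. Once continuity is in hand, $H$ provides the desired homotopy and, combined with its symmetric counterpart, establishes that $\tau(f_s)$ and $\tau(f_{1-s})$ are mutual homotopy inverses.
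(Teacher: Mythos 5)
Your proposal is correct and follows essentially the argument the paper itself invokes (the paper gives no written proof, deferring to Grove's Lemma~3.6): one uses associativity of $\phi$ to rewrite each composition as concatenation with the out-and-back track $s\mapsto f_s(\alpha(0))$, contracts that track through the family $u\mapsto f_{u\cdot}$, and relies on the zero-energy convention in $\phi$ so that the homotopy ends exactly at the identity, with continuity of $\phi$ and of the rescaled tracks as the only technical points. The only quibble is attributional: the square-root-energy concatenation $\phi$ and its continuity come from Goresky--Hingston rather than Grove, but this does not affect the argument.
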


In particular, when $f:M\to M$ is homotopic to identity,
$\Lambda(f)$ is homotopy equivalent to the free loop space
$\Lambda M$ whereas when $f:M\to M$ is homotopic to a constant,
$\Lambda(f)$ is homotopy equivalent to a point.

\begin{cor}\label{cor:taufs}
    Under the hypothesis of Lemma~\ref{lem:taufs},
    the maps $\tau(f_s)$ and $\tau(f_{1-s})$ induce
    a $\delta(f_s)$-interleaving between the persistence modules
    $t\mapsto H_*(\Lambda_{f_j}^{<t})$, $j\in\{ 0,1\}$, \emph{i.e.},
    for all $t\geq 0$, the induced morphisms
    \begin{equation*}
        \begin{cases}
            \tau(f_{s})_* : H_*(\Lambda_{f_0}^{< t}) \to
            H_*(\Lambda_{f_1}^{<t+\delta(f_s)}), \\
            \tau(f_{1-s})_* : H_*(\Lambda_{f_1}^{< t}) \to
            H_*(\Lambda_{f_0}^{<t+\delta(f_s)}),
        \end{cases}
    \end{equation*}
    commute with the inclusion morphisms
    (the same is true for $t\mapsto H_*(\Lambda_{f_j}^{\leq t})$).
\end{cor}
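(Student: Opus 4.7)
The plan is to verify both pieces of the $\delta(f_s)$-interleaving: naturality of $\tau(f_s)_*$ and $\tau(f_{1-s})_*$ with respect to the filtration by $P$-value, and the round-trip identities in homology. The well-definedness on sublevels is immediate from the definition of $\delta(f_s)$ combined with (\ref{eq:Ephi}): applying $\phi$ to $\alpha$ and the path $t\mapsto f_t(\alpha(0))$ produces an element of $\Lambda(f_1)$ whose $P$-value exceeds $P_{f_0}(\alpha)$ by at most $\delta(f_s)$. Hence $\tau(f_s)$ restricts continuously to $\Lambda_{f_0}^{<t}\to\Lambda_{f_1}^{<t+\delta(f_s)}$, and likewise for $\tau(f_{1-s})$. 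Naturality with respect to inclusions is trivial at the space level since the formula defining $\tau(f_s)$ does not depend on the sublevel parameter, and this persists after applying $H_*$.

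For the composition identities, I would unfold $\tau(f_{1-s})\circ\tau(f_s)$ using associativity of $\phi$ (equation (\ref{eq:phiassociative})) and the fact that $\tau(f_s)(\alpha)(0)=\alpha(0)$. This yields
\begin{equation*}
\tau(f_{1-s})\circ\tau(f_s)(\alpha) = \phi\bigl(\alpha,\beta_\alpha\bigr),
\end{equation*}
where $\beta_\alpha$ is the loop at $\alpha(0)$ obtained by concatenating $t\mapsto f_t(\alpha(0))$ with its reverse. Retracting $\beta_\alpha$ to the constant loop at $\alpha(0)$ by the standard symmetric truncation---restrict the forward path to $[0,1-u]$ and reverse it, exactly the construction used in the proof of Lemma~\ref{lem:taufs}---produces a continuous homotopy $H\colon[0,1]\times\Lambda(f_0)\to\Lambda(f_0)$ from $\tau(f_{1-s})\circ\tau(f_s)$ to $\id_{\Lambda(f_0)}$. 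At every intermediate time the appended loop has $P$-value at most $2\delta(f_s)$, so (\ref{eq:Ephi}) gives $P_{f_0}(H_u(\alpha))\leq P_{f_0}(\alpha)+2\delta(f_s)$ uniformly in $u$. Therefore $H$ restricts to a homotopy $[0,1]\times\Lambda_{f_0}^{<t}\to\Lambda_{f_0}^{<t+2\delta(f_s)}$, identifying the composition on homology with the morphism induced by the inclusion $\Lambda_{f_0}^{<t}\hookrightarrow\Lambda_{f_0}^{<t+2\delta(f_s)}$; the symmetric composition $\tau(f_s)\circ\tau(f_{1-s})$ is handled identically. The same arguments apply verbatim to the filtration by $\leq$-sublevels.

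The only delicate point is that Lemma~\ref{lem:taufs} asserts homotopy equivalence without recording any filtration bound, so one must revisit the construction and pick the specific symmetric-truncation homotopy---for which (\ref{eq:Ephi}) gives the sharp $2\delta(f_s)$ bound---rather than an arbitrary homotopy between the composition and the identity. Once this energy-controlled refinement of the null-homotopy of $\beta_\alpha$ is in place, the remaining diagrammatics are formal.
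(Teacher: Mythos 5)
Your argument is correct and is essentially the paper's (implicit) proof: the corollary is stated without a separate argument, resting exactly on the filtration estimate $P_{f_1}(\tau(f_s)\alpha)\leq P_{f_0}(\alpha)+\delta(f_s)$ from (\ref{eq:Ephi}) together with the fact that the Grove-type truncation homotopy behind Lemma~\ref{lem:taufs} increases $P$ by at most $2\delta(f_s)$, which is precisely the energy-controlled refinement you make explicit. Only a cosmetic nit: the appended loop $\beta_\alpha=\phi\bigl(t\mapsto f_t(\alpha(0)),\,t\mapsto f_{1-t}(\alpha(0))\bigr)$ is based at $\alpha(1)=f_0(\alpha(0))$ rather than at $\alpha(0)$ (these agree only when $f_0=\id$), which changes nothing in the estimate or the conclusion.
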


\subsection{Proof of Theorem~\ref{thm:general}}

The following fact is well-known to the experts when $f=\id$
(and this case will be enough for us).

\begin{lem}\label{lem:inclusionfinite}
    Given $f:M\to M$, for every $\lambda\geq 0$, the image of the
    inclusion morphism $H_*(\Lambda_f^{\leq \lambda})\to H_*(\Lambda_f)$
    is finitely generated.
\end{lem}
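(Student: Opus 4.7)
The plan is to prove the stronger statement that the inclusion $\Lambda_f^{\leq\lambda}\hookrightarrow\Lambda_f$ factors up to homotopy through a compact finite-dimensional subspace of $\Lambda_f$, from which the finite generation of the image is immediate. The tool for this factorization is the classical Morse broken-geodesic approximation, suitably adapted to preserve the constraint $\gamma(1)=f(\gamma(0))$.

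Let $\iota>0$ be the injectivity radius of $M$ and choose an integer $N$ with $\lambda/\sqrt{N}<\iota$. For any $\gamma\in\Lambda_f^{\leq\lambda}$, a Cauchy-Schwarz estimate gives, for each $i\in\{0,\ldots,N-1\}$,
\begin{equation*}
    L\bigl(\gamma|_{[i/N,(i+1)/N]}\bigr)
    \leq \frac{1}{\sqrt{N}}\sqrt{E(\gamma)} \leq \frac{\lambda}{\sqrt{N}} < \iota,
\end{equation*}
so in particular consecutive vertices $x_i:=\gamma(i/N)$ (with $x_N:=f(x_0)$) satisfy $d(x_i,x_{i+1})<\iota$ and are joined by a unique minimizing geodesic. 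I would then introduce
\begin{equation*}
    B := \bigl\{\gamma\in\Lambda_f \mid \gamma \text{ is a geodesic on each }[i/N,(i+1)/N] \text{ of length }<\iota\bigr\},
\end{equation*}
and identify $B$ with the closed subset of $M^N$ given by tuples $(x_0,\ldots,x_{N-1})$ with $d(x_i,x_{i+1})<\iota$ and $x_N=f(x_0)$, via the broken-geodesic construction. The subspace $B\cap\{E\leq\lambda^2\}$ is then a compact (hence finite-dimensional) subspace of $\Lambda_f$.

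Next I would define a continuous retraction $r:\Lambda_f^{\leq\lambda}\to B\cap\{E\leq\lambda^2\}$ by sending $\gamma$ to the broken geodesic with vertices $x_i=\gamma(i/N)$ (continuity follows from the Sobolev embedding $H^1([0,1],M)\hookrightarrow C^0([0,1],M)$ and the smoothness of $\exp$ on small balls; the energy bound is preserved because the minimizing geodesic minimizes energy among paths joining fixed endpoints on a fixed parameter interval). I would then construct a homotopy $h:[0,1]\times\Lambda_f^{\leq\lambda}\to\Lambda_f$ from the inclusion to $r$ by interpolating, on each piece $[i/N,(i+1)/N]$, between $\gamma$ and the minimizing geodesic via the straight-line homotopy in $T_{x_i}M$ pushed forward by $\exp_{x_i}$. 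Since the interpolation fixes the vertices $x_i$, and in particular fixes $\gamma(0)$ and $\gamma(1)=f(\gamma(0))$, each $h_s(\gamma)$ lies in $\Lambda_f$.

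This exhibits the inclusion $\Lambda_f^{\leq\lambda}\hookrightarrow\Lambda_f$ as homotopic to the composition $\Lambda_f^{\leq\lambda}\xrightarrow{r} B\cap\{E\leq\lambda^2\}\hookrightarrow\Lambda_f$, so its image on homology factors through $H_*(B\cap\{E\leq\lambda^2\})$, which is finitely generated because $B\cap\{E\leq\lambda^2\}$ is a compact subspace of $M^N$. The main technical point, and the step I would be most careful with, is verifying that the exponential-interpolation homotopy is genuinely continuous as a map into $\Lambda_f$ (i.e.\ in the $H^1$-topology), rather than only into the space of continuous paths; this is standard but relies on the uniform length bound $<\iota$ coming from the choice of $N$, together with smooth dependence of $\exp$ on its base point.
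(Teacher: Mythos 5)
Your broken-geodesic reduction is correct and is essentially the same finite-dimensional approximation the paper itself invokes: the choice of $N$ with $\lambda/\sqrt{N}<\iota$, the fact that the retraction $r$ preserves the constraint $\gamma(1)=f(\gamma(0))$ and does not increase energy, and the exponential interpolation staying inside $\Lambda_f$ (each piece of $\gamma$ stays in the ball of radius $<\iota$ around its left vertex, so the convex interpolation in $T_{x_i}M$ is well defined) all check out. The genuine gap is the very last step: ``finitely generated because $B\cap\{E\leq\lambda^2\}$ is a compact subspace of $M^N$.'' Compactness of a subspace of a manifold does not imply finitely generated singular homology: the Hawaiian earring embeds in $\R^2\subset M^N$ and has non--finitely generated $H_1$, and sublevel sets of smooth functions can be arbitrary closed sets, so knowing that your set is $\{g\leq\lambda^2\}$ for the smooth function $g(x_0,\dots,x_{N-1})=\sum_i N\,d(x_i,x_{i+1})^2$ gives nothing for free. (A minor wording slip along the way: the set of tuples with $d(x_i,x_{i+1})<\iota$ is \emph{open} in $M^N$, not closed; compactness of $B\cap\{E\leq\lambda^2\}$ does hold, but only because $E\leq\lambda^2$ forces $d(x_i,x_{i+1})\leq\lambda/\sqrt{N}<\iota$, so that the sublevel is closed in the compact manifold $M^N$ and contained in the open set of admissible tuples.)

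The gap is easy to close, because you only need to factor the inclusion through \emph{some} space with finitely generated homology, not through the compact sublevel itself. For instance, since $\lambda^2<N\iota^2$, Sard's theorem provides a regular value $c$ of $g$ with $\lambda^2<c<N\iota^2$; then $\{g\leq c\}$ is a compact smooth manifold with boundary containing $B\cap\{E\leq\lambda^2\}$ and contained in $B$, and your homotopy factors the inclusion $\Lambda_f^{\leq\lambda}\hookrightarrow\Lambda_f$ through $\{g\leq c\}$, whose homology is finitely generated. Alternatively, cover the compact set by finitely many geodesically convex balls inside the admissible open set and apply the nerve lemma. With either patch your argument is complete, and it then genuinely differs from the paper's conclusion of the proof: after the same broken-geodesic reduction, the paper perturbs the induced finite-dimensional functional to a Morse function (using that the relevant property is $C^0$-open and that Palais--Smale holds) and bounds the number of critical points below the level, whereas your route is purely topological and avoids the perturbation argument altogether.
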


\begin{proof}
    Let $f:M\to M$. It is enough to prove that for every $\lambda\geq 0$,
    the inclusion morphism $H_*(\Lambda_f^{\leq \lambda})\to H_*(\Lambda_f^{\leq K})$
    is finitely generated for some $K\geq\lambda$.
    Let $K>0$ be a regular value of $E_f$. Using a subspace of broken geodesics,
    one can retract $\Lambda_f^{\leq K}$ on a finite-dimensional submanifold
    $N$ with a retraction $r:\Lambda_f^{\leq K}\to N$ by deformation such that
    $E_f\circ r\leq E_f$ and the critical points of $E_f$ are exactly
    the critical points of $g:=E_f\circ r$
    (see \emph{e.g.} \cite[\S 16]{Mil63}).
    Therefore, the desired result boils down to proving that the image of
    the homology morphism induced by $\{ g\leq \lambda\}
    \hookrightarrow N$ has a finitely generated image for $g:N\to\R$ a map
    on a finite-dimensional manifold satisfying the Palais-Smale condition.
    As this property is $C^0$-open, one can assume that $g$ is a Morse
    map satisfying the Palais-Smale condition
    (one can adapt indeed the proof of \cite[Theorem~2.7]{Mil65}
    using the fact that the critical set of $g$ in $\{ g\leq\lambda\}$
    is compact and the norm of $\ud g$ is uniformly bounded
    from below outside any neighborhood of this set).
    The Palais-Smale condition then
    implies that $\{ g\leq\lambda\}$ contains only a finite number $m\in\N$
    of critical points (which are non-degenerate), so $H_*(\{ g\leq\lambda\})$
    can be generated by
    $m$ elements.
\end{proof}

\begin{proof}[Proof of Theorem~\ref{thm:general}]
    Let us first prove that one can assume $M=\widetilde{M}$.
    Let us assume that the theorem is true for a finite Riemannian cover
    $q:\widetilde{M}\to M$
    satisfying the homological hypothesis.
    Let $f=f_1:M\to M$ be homotopic to the $\id=f_0$ through $(f_t)$.
    By applying the homotopy lifting property to $(f_t\circ q)$
    one gets a homotopy $(\tilde{f}_t)$ in $\widetilde{M}\to\widetilde{M}$
    from the identity to a map $\tilde{f}_1$ commuting with $f$.
    If $(\tilde{p}_n,t_n)$ is a sequence of $\widetilde{M}\times\R$
    satisfying the conclusion of the theorem for $\tilde{f}_1$,
    then $(q(\tilde{p}_n),t_n)$ is the desired sequence for $f$.

    Let us now assume that $M=\widetilde{M}$.
    Let $f:M\to M$ be a smooth map homotopic to the identity.
    According to Lemma~\ref{lem:taufs},
    $H_*(\Lambda(f))$ is isomorphic to $H_*(\Lambda M)$ which is
    not finitely generated.
    Therefore, Lemma~\ref{lem:inclusionfinite} implies
    that $H_*(\Lambda_f^{\leq\lambda})\to H_*(\Lambda_f)$ is never
    onto for $\lambda\in [0,+\infty)$.
    According to the Morse deformation lemma,
    there exists a sequence $(t_n)$ of positive critical values
    with $t_n\to +\infty$.
    The conclusion follows from Corollary~\ref{cor:variational}.
\end{proof}

\subsection{Non-degeneracy of a translated point}
\label{se:nondegeneracy}

In this section, we show the equivalence between the non-degeneracy
of a translated point in the sense of contact geometry and
the non-degeneracy of the associated critical point of $E_f$.

Let us denote $R$ the Riemann tensor defined by
\begin{equation*}\label{eq:Riemann}
    R(X,Y) := [\nabla_X,\nabla_Y] - \nabla_{[X,Y]},
\end{equation*}
so that a vector field $J$ along $\gamma$ is a Jacobi field
if and only if
\begin{equation*}
    \ddot{J} = R(\dot{\gamma},J)\dot{\gamma}.
\end{equation*}
Let us also recall that the second derivative $\ud^2 f$ of a map
$f:M\to N$ is well-defined as $\nabla(\ud f)$,
\emph{i.e.} by the tensorial expression: for all vector fields $X,Y$,
\begin{equation*}
    \ud^2 f[X,Y] := X\cdot (Y\cdot f) - \ud f\cdot \nabla_X Y.
\end{equation*}

Let us recall that the Hessian $\ud^2 E_f(\gamma)$ of
the energy functional is well-defined at a critical point $\gamma$
(see \emph{e.g.} \cite[\S 13]{Mil63}).

\begin{prop}\label{prop:hessian}
    Let $\gamma\in\Lambda(f)$ be a critical point of $E_f$,
    for every $U,V\in T_\gamma\Lambda(f)$,
    \begin{equation*}
        \frac{1}{2}\ud^2 E_f(\gamma)[U,V] =
        \int_0^1 \left[R(V,\dot{\gamma})U,\dot{\gamma}) +
        \langle\dot{U},\dot{V}\rangle\right]\ud t
        + \langle \ud^2 f_{\gamma(0)}[U(0),V(0)],\dot{\gamma}(1)\rangle.
    \end{equation*}
\end{prop}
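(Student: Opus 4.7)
\emph{Proof plan.} The proof is a standard second-variation computation adapted to the Hilbert submanifold $\Lambda(f)$; its only real subtlety is the handling of the boundary condition $\gamma(1)=f(\gamma(0))$, which is where $\ud^2 f$ enters the final formula. I would start by choosing a smooth two-parameter family $(u,v)\mapsto \gamma_{u,v}\in\Lambda(f)$ with $\gamma_{0,0}=\gamma$, $\partial_u\gamma_{u,v}|_{0}=U$ and $\partial_v\gamma_{u,v}|_{0}=V$, so that by definition of the Hessian at a critical point
\begin{equation*}
    \ud^2 E_f(\gamma)[U,V] \;=\; \left.\frac{\partial^2}{\partial v\,\partial u}E(\gamma_{u,v})\right|_{u=v=0}.
\end{equation*}
The constraint $\gamma_{u,v}(1)=f(\gamma_{u,v}(0))$ encodes all the boundary data through the endpoint map $c(u,v):=\gamma_{u,v}(0)$.

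Starting from the first variation $\tfrac{1}{2}\partial_u E(\gamma_u)=\int_0^1 \langle \dot{\gamma}_u,\nabla_{\partial_t}\partial_u\gamma_u\rangle\,\ud t$, I would differentiate in $v$, use torsion-freeness to swap $\nabla_{\partial_t}\leftrightarrow\nabla_{\partial_v}$ wherever needed, and the definition of the Riemann tensor to commute $\nabla_{\partial_v}$ and $\nabla_{\partial_u}$. Evaluating at $(u,v)=(0,0)$ produces exactly three pieces: the kinetic term $\int_0^1 \langle \dot{U},\dot{V}\rangle\,\ud t$, the curvature term $\int_0^1 \langle R(V,\dot{\gamma})U,\dot{\gamma}\rangle\,\ud t$, and an auxiliary term $\int_0^1\langle\dot{\gamma},\nabla_{\partial_t}W\rangle\,\ud t$, where $W:=\left.\nabla_{\partial_v}\partial_u\gamma\right|_{u=v=0}$ is a vector field along $\gamma$ whose boundary values are controlled by $c$.

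An integration by parts turns the auxiliary integral into $[\langle\dot{\gamma},W\rangle]_0^1-\int_0^1\langle\ddot{\gamma},W\rangle\,\ud t$, and the interior integral vanishes because $\gamma$ is a geodesic. To identify the boundary contribution I would differentiate the relation $\gamma_{u,v}(1)=f(c(u,v))$ twice covariantly; the very definition $\ud^2 f=\nabla(\ud f)$ then yields
\begin{equation*}
    W(1) \;=\; \ud f_{\gamma(0)}\cdot W(0)\,+\,\ud^2 f_{\gamma(0)}[V(0),U(0)],
\end{equation*}
while $W(0)\in T_{\gamma(0)}M$ is free. Substituting and regrouping, the $W(0)$-dependent part reads $\langle \ud f^T\dot{\gamma}(1)-\dot{\gamma}(0),W(0)\rangle$ and vanishes by the critical point equation proven in the previous proposition. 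Only $\langle \ud^2 f[V(0),U(0)],\dot{\gamma}(1)\rangle$ survives, and by the symmetry of the second covariant derivative (torsion-freeness once more) this equals the stated boundary term.

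The main obstacle is purely bookkeeping: distinguishing covariant from non-covariant mixed partials in a two-parameter variation whose endpoints are not fixed, and in particular deriving carefully the formula relating $W(1)$ to $W(0)$ through $\ud^2 f$. Once that identity is in place, the critical point equation absorbs the free vector $W(0)$ exactly as it should, and the computation becomes routine. As a sanity check, specialising to $f=\id$ recovers the classical second variation on the free loop space $\Lambda M$ in which no boundary term survives.
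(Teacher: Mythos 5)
Your proposal is correct and follows essentially the same route as the paper: a two-parameter variation, commuting covariant derivatives to produce the curvature and kinetic terms, and then differentiating the constraint $\gamma_{u,v}(1)=f(\gamma_{u,v}(0))$ twice to identify the boundary term $\nabla_V U(1)-\ud f\cdot\nabla_V U(0)=\ud^2 f[U(0),V(0)]$, with the critical point equation $\ud f^T\cdot\dot{\gamma}(1)=\dot{\gamma}(0)$ absorbing the remaining free endpoint contribution. No gaps; the bookkeeping you flag is exactly what the paper carries out.
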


\begin{proof}
    Let $U,V\in T_\gamma\Lambda(f)$ and
    let $(\gamma_{u,v})$ be a smooth family of $\Lambda(f)$ such that
    $\gamma_{0,0} = \gamma$, $U = \partial_u\gamma_{u,0}$,
    $V = \partial_v\gamma_{0,v}$ (the partial derivatives being
    taken at $0$).
    Then
    \begin{equation*}
        \begin{split}
            \frac{1}{2}\ud^2 E_f(\gamma)[U,V]
            &=
        \int_0^1 \frac{\partial}{\partial v}
        \left\langle \frac{D}{\partial
        u}\dot{\gamma}_{u,v},\gamma_{0,v}\right\rangle \ud t \\
            &=
            \int_0^1 \left[\left\langle \frac{D^2}{\partial v\partial u}\dot{\gamma}_{u,v},
            \dot{\gamma}\right\rangle + \left\langle \frac{D}{\partial u}\dot{\gamma}_{u,0},
            \frac{D}{\partial v}\dot{\gamma}_{0,v}\right\rangle \right]\ud t \\
            &=
            \int_0^1 \left[\left\langle
                R(V,\dot{\gamma})U,\dot{\gamma}\right\rangle + \langle
    \dot{U},\dot{V}\rangle \right]\ud t +
    \big[\langle \nabla_V U,\dot{\gamma} \rangle\big]^1_0.
    \end{split}
    \end{equation*}
    By derivating the identity $f(\gamma_{u,v}(0)) = \gamma_{u,v}(1)$,
    one gets
    \begin{equation*}
        \ud^2 f_{\gamma(0)}[V(0),U(0)] + \ud f_{\gamma(0)}\cdot\nabla_V U(0) =
        \nabla_V U(1),
    \end{equation*}
    so that, using $\ud f^T\cdot\dot{\gamma}(1) = \dot{\gamma}(0)$,
    \begin{equation*}
        \big[\langle \nabla_V U,\dot{\gamma} \rangle\big]^1_0 =
        \langle \nabla_V U(1) - \ud f\cdot\nabla_V U(0),\dot{\gamma}(1)\rangle
        = \langle \ud^2 f[U(0),V(0)] , \dot{\gamma}(1) \rangle.
    \end{equation*}
\end{proof}

\begin{cor}\label{cor:hessian}
    Let $\gamma\in\Lambda(f)$ be a critical point of $E_f$,
    a vector field $J\in T_\gamma\Lambda(f)$ belongs to the kernel
    of the quadratic form $\ud^2 E_f(\gamma)$ if and only if
    $J$ is a Jacobi field satisfying
    \begin{equation*}
        (\ud^2 f\cdot J(0))^T \cdot\dot{\gamma}(1) =
        \dot{J}(0) - \ud f^T\cdot \dot{J}(1),
    \end{equation*}
    where $\ud^2 f\cdot J(0)$ denotes the linear morphism
    $T_{\gamma(0)}M\to T_{\gamma(1)} M$, $u\mapsto \ud^2 f[J(0),u]$.
\end{cor}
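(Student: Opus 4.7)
The plan is to characterize $\ker\ud^2 E_f(\gamma)$ by feeding the bilinear formula of Proposition~\ref{prop:hessian} a test field $V\in T_\gamma\Lambda(f)$ with $J$ substituted for $U$, and integrating by parts in the time variable until $V$ appears undifferentiated. The resulting expression will split into a bulk integral over $[0,1]$ and a purely boundary contribution at $t=0,1$; by choosing $V$ first to vanish at both endpoints and then to have arbitrary initial value, these two pieces must vanish independently, yielding respectively the Jacobi equation and the stated boundary condition.

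More explicitly, the curvature symmetry $\langle R(V,\dot\gamma)J,\dot\gamma\rangle = \langle R(\dot\gamma,J)\dot\gamma,V\rangle$ and the integration by parts
$$\int_0^1 \langle \dot J,\dot V\rangle\,\ud t = \bigl[\langle \dot J,V\rangle\bigr]_0^1 - \int_0^1 \langle \ddot J,V\rangle\,\ud t,$$
combined with the constraint $V(1)=\ud f\cdot V(0)$ and the adjoint identity $\langle \ud^2 f[J(0),V(0)],\dot\gamma(1)\rangle = \langle (\ud^2 f\cdot J(0))^T\cdot\dot\gamma(1),V(0)\rangle$, transform the formula of Proposition~\ref{prop:hessian} into
$$\tfrac{1}{2}\ud^2 E_f(\gamma)[J,V] = \int_0^1 \bigl\langle R(\dot\gamma,J)\dot\gamma - \ddot J,\,V\bigr\rangle\,\ud t + \bigl\langle \ud f^T\cdot\dot J(1) - \dot J(0) + (\ud^2 f\cdot J(0))^T\cdot\dot\gamma(1),\,V(0)\bigr\rangle.$$

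Now I would test this against $V$ satisfying $V(0)=V(1)=0$, which is an admissible subspace of $T_\gamma\Lambda(f)$ since $V(0)=0$ automatically forces $V(1)=\ud f\cdot V(0)=0$; the fundamental lemma of the calculus of variations then forces the bulk integrand to vanish, i.e.\ $\ddot J = R(\dot\gamma,J)\dot\gamma$, so $J$ is a Jacobi field. With the bulk term gone, I would then let $V(0)$ range over all of $T_{\gamma(0)}M$, which can be realized by any $H^1$ interpolation between a prescribed $V(0)$ and $V(1):=\ud f\cdot V(0)$; the vanishing of the boundary coefficient yields exactly $(\ud^2 f\cdot J(0))^T\cdot\dot\gamma(1) = \dot J(0) - \ud f^T\cdot\dot J(1)$. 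The converse follows by reading the same identity backwards. The computation is essentially mechanical, and the only mild obstacle is keeping track of adjoints: $(\ud^2 f\cdot J(0))^T$ is taken fibrewise as a map $T_{\gamma(1)}M\to T_{\gamma(0)}M$, and $\ud f^T$ has to be interpreted as acting on $f^*TM$, exactly as flagged in the proof of the preceding proposition.
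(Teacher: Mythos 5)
Your proposal is correct and follows essentially the same route as the paper: substitute $U=J$ into Proposition~\ref{prop:hessian}, test against $V$ with $V(0)=0$ (hence $V(1)=\ud f\cdot V(0)=0$) to get the Jacobi equation, then integrate $\int_0^1\langle\dot J,\dot V\rangle\,\ud t$ by parts and collect the boundary terms at $V(0)$ to obtain exactly the stated condition. The curvature symmetry, the use of $V(1)=\ud f\cdot V(0)$, and the adjoint manipulation of $\ud^2 f[J(0),V(0)]$ all match the paper's argument.
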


In particular, the kernel of $\ud^2 E_f(\gamma)$ has a dimension bounded
by $2\dim M - 1$.
Indeed, the vector space of Jacobi fields of $\gamma$ has dimension $2\dim M$,
and $J\in T_\gamma\Lambda(f)$ implies that $J(1)=0$ if $J(0)=0$,
so, for instance, the Jacobi field $J(t)=t\dot{\gamma}(t)$ does not belong
to $T_\gamma\Lambda(f)$.

\begin{proof}
    Let $J\in \ker \ud^2 E_f(\gamma)$.
    Applying Proposition~\ref{prop:hessian} with $U:=J$ and every
    vector field $V$ with $V(0)=0$, one proves that $J$ is a Jacobi
    field in a classical manner.
    The conclusion now follows from the identity $\forall V\in T_\gamma\Lambda(f)$,
    \begin{equation*}
        \int_0^1 \langle \dot{J},\dot{V}\rangle\ud t =
        [\langle \dot{J},V\rangle]_0^1 - \int_0^1 \langle \ddot{J},V\rangle\ud t
        = \langle \ud f^T\cdot \dot{J}(1) - \dot{J}(0), V(0)\rangle
        - \int_0^1 \langle \ddot{J},V\rangle\ud t.
    \end{equation*}
\end{proof}

\begin{prop}\label{prop:nondegeneracy}
    Let $f:M\to M$ be a diffeomorphism, a translated point $(x,v)\in SM$
    of $\tilde{f}$ is non-degenerate for the time-shift $t>0$
    if and only if the associated geodesic $\gamma\in\Lambda(f)$
    of length $t$ is a non-degenerate critical point of $E_f$.
\end{prop}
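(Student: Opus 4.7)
The plan is to reduce the contact-geometric non-degeneracy to the Jacobi-field criterion provided by Corollary~\ref{cor:hessian}. On the variational side, $\gamma$ is a non-degenerate critical point of $E_f$ if and only if the only Jacobi field $J$ along $\gamma$ satisfying both $J(1)=\ud f\cdot J(0)$ and $(\ud^2 f\cdot J(0))^T\cdot\dot\gamma(1) = \dot J(0)-\ud f^T\cdot\dot J(1)$ is zero. On the contact side, $1$ is an eigenvalue of $\ud(G_{-t}\circ\hat f)(p)$ if and only if there exists a nonzero $\xi\in T_p(TM)$ with $\ud\hat f(p)\cdot\xi = \ud G_t(p)\cdot\xi$. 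I therefore need to identify these two linear systems.

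I would work in the horizontal-vertical splitting $T_p(TM)\cong T_xM\oplus T_xM$ induced by the Levi-Civita connection, and use the unit-speed reparametrization $\tilde\gamma(s)=\gamma(s/t)$ on $[0,t]$. Under $J(s)=\tilde J(ts)$, Jacobi fields along $\gamma$ correspond bijectively to Jacobi fields along $\tilde\gamma$, with $\dot J(0)=t\dot{\tilde J}(0)$, $\dot J(1)=t\dot{\tilde J}(t)$ and $\dot\gamma(1)=t\dot{\tilde\gamma}(t)$, so the equations of Corollary~\ref{cor:hessian} rescale to $\tilde J(t)=\ud f\cdot\tilde J(0)$ and $(\ud^2 f[\tilde J(0),\cdot])^T\dot{\tilde\gamma}(t)=\dot{\tilde J}(0)-\ud f^T\dot{\tilde J}(t)$. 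In these coordinates the classical Jacobi formula reads $\ud G_s(p)\cdot(u,w)=(\tilde J(s),\dot{\tilde J}(s))$ (after parallel transport back to $T_xM$), where $\tilde J$ is the Jacobi field with $\tilde J(0)=u$, $\dot{\tilde J}(0)=w$.

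The crux is computing $\ud\hat f(p)\cdot(u,w)$. Differentiating along a curve $s\mapsto(c(s),V(s))$ in $TM$ with $c(0)=x$, $\dot c(0)=u$, $V(0)=v$, $\nabla_{\dot c}V(0)=w$, the horizontal component is $\ud f\cdot u$ and the vertical component is $(\nabla_u\ud f^{-T})v+\ud f^{-T}\cdot w$. The key algebraic identity driving the proof is
\[(\nabla_u\ud f^T)\cdot b = (\ud^2 f[u,\cdot])^T\cdot b,\]
which I would derive by differentiating the duality $\langle\ud f\cdot A,B\rangle=\langle A,\ud f^T\cdot B\rangle$ in direction $u$ and using compatibility of $\nabla$ with the metric. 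Differentiating $\ud f^T\ud f^{-T}=\id$ and inserting the translated-point relation $\dot{\tilde\gamma}(t)=\ud f^{-T}v$ then yields $(\nabla_u\ud f^{-T})v=-\ud f^{-T}(\ud^2 f[u,\cdot])^T\dot{\tilde\gamma}(t)$.

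Equating horizontal and vertical components in $\ud\hat f(p)\cdot(u,w)=\ud G_t(p)\cdot(u,w)$ now produces exactly the two Jacobi-field equations recalled above, so the two non-degeneracy notions coincide. I expect the main technical obstacle to be the identity $(\nabla_u\ud f^T)\cdot b=(\ud^2 f[u,\cdot])^T\cdot b$ and the ensuing formula for $\nabla\ud f^{-T}$, which is what forces $\ud^2 f$ to appear in the vertical direction of $\ud\hat f$ in the precise form dictated by Definition~\ref{def:nondegenerate}; the remaining steps are bookkeeping and the $t$-rescaling between the unit-speed and the $[0,1]$-speed-$t$ parametrizations.
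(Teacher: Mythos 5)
Your proposal is correct and follows essentially the same route as the paper: reduce non-degeneracy of $\gamma$ to the Jacobi-field kernel description of Corollary~\ref{cor:hessian}, compute $\ud\hat f$ in the Levi-Civita horizontal--vertical splitting, and use the identities $\nabla_U(\ud f^T)=(\nabla_U\ud f)^T=(\ud^2 f[U,\cdot])^T$ and $\nabla_U(\ud f^{-T})=-\ud f^{-T}(\nabla_U\ud f)^T\ud f^{-T}$ together with the Jacobi-field formula for $\ud G_t$ to match the two linear systems. The only (immaterial) difference is that you pass to the unit-speed reparametrization on $[0,t]$, whereas the paper keeps the speed-$t$ geodesic on $[0,1]$ and carries the $1/t$ factors explicitly; also note that no parallel transport back to $T_xM$ is needed, since both $\ud G_t(p)$ and $\ud\hat f(p)$ are naturally compared in the splitting at $G_t(p)=\hat f(p)$.
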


\begin{proof}
    We will need the following differential identity: for
    every diffeomorphism $f:M\to M$ and every
    vector field $U$ of $M$,
    \begin{equation}\label{eq:diffT}
        \nabla_U(\ud f^{-T}) = - \ud f^{-T}\cdot(\nabla_U\ud f)^T\cdot\ud f^{-T}.
    \end{equation}
    This identity can be derived by taking the covariant derivative of the identity
    $\ud f^{-T}\cdot\ud f^T = \id$ ($\id$ meaning the section
    $x\mapsto \id_{T_x M}$)
    and using $\nabla_U(\ud f^T) = (\nabla_U \ud f)^T$
    (which can be obtained by derivating the definition of the adjoint operator).

    Let $f:M\to M$ be a diffeomorphism and let
    $\hat{f}:TM\to TM$ be its symplectic lift,
    \begin{equation*}
        f(x,v) = (f(x),\ud f_x^{-T}\cdot v),\quad
        \forall (x,v)\in TM.
    \end{equation*}
    The differential of $\hat{f}$ at $(x,v)\in TM$ is
    \begin{equation*}
        \ud\hat{f}_{(x,v)}\cdot (\xi_1,\xi_2) =
        \left(\ud f_x\cdot\xi_1,\ud f_x^{-T}\cdot\xi_2 + \nabla_{\xi_1}(\ud
        f^{-T})\cdot v\right),\quad \forall \xi_1,\xi_2\in T_x M,
    \end{equation*}
    where the identification $T_{(x,v)}TM\simeq T_x M\times T_x M$
    is given by the Levi-Civita connection (see \emph{e.g.} \cite[\S 1.3.1]{Pat99}).
    Let $(x,v)\in SM$ be a translated point of $\tilde{f}$ for the
    time-shift $t>0$ and $\gamma\in\Lambda(f)$ be the associated
    geodesic of length $t$.
    It is non-degenerate if and only if 
    $\ud(\hat{f}\circ G_{-t})_{G_t(x,v)}$
    does not have $1$ as an eigenvalue.
    We recall that, for $t>0$,
    \begin{equation*}
        (\ud G_{-t})_{G_t(x,v)}\cdot\left(J(1),\frac{1}{t}\dot{J}(1)\right) =
        \left(J(0),\frac{1}{t}\dot{J}(0)\right),
    \end{equation*}
    for every Jacobi field $J$ along $\gamma$
    (the $t^{-1}$ factor being due to the reparametrization, see \emph{e.g.}
    \cite[\S 1.5]{Pat99}).
    Since $J\mapsto (J(1),\dot{J}(1))$ is an isomorphism and
    $tv = \dot{\gamma}(0)$, $(x,v)$ is degenerate
    if and only if there exists a Jacobi field $J$ along $\gamma$ such that
    \begin{equation}\label{eq:degenerateCont}
        \begin{cases}
            \ud f\cdot J(0) = J(1), \\
            \ud f^{-T}\cdot \dot{J}(0) + \nabla_{J(0)}(\ud f^{-T})\cdot \dot{\gamma}(0)
            = \dot{J}(1).
        \end{cases}
    \end{equation}
    The first equation of (\ref{eq:degenerateCont}) means that $J\in T_\gamma\Lambda(f)$.
    Using identity (\ref{eq:diffT}) and
    $\ud f^{-T}\cdot\dot{\gamma}(0) = \dot{\gamma}(1)$, the second equation
    of (\ref{eq:degenerateCont}) becomes
    \begin{equation*}
        \ud f^{-T}\cdot\dot{J}(0) - \ud f^{-T}\cdot (\nabla_{J(0)}\ud f)^T\cdot
        \dot{\gamma}(1) = \dot{J}(1).
    \end{equation*}
    Finally,
    applying $\ud f^T$ to this last equation, we find the equation of
    Corollary~\ref{cor:hessian}.
\end{proof}

\section{The Zoll case}
\label{se:zoll}

\subsection{The Chas-Sullivan product}

Let us recall and extend in an obvious manner
the filtered Chas-Sullivan product defined in \cite{GH09}.
We refer to \cite{GH09} for technical details.
For now, we do not need to assume that the closed Riemannian manifold $M$ is Zoll.
Let $f,g:M\to M$ be smooth maps, as the space $\Lambda(f)\times_M\Lambda(g)$
is a submanifold of $\Lambda(f)\times\Lambda(g)$ of codimension $n:=\dim M$,
there is a well-defined Gysin morphism
\begin{equation*}
    H_*(\Lambda(f)\times\Lambda(g))\to H_{*-n}(\Lambda(f)\times_M\Lambda(g))
\end{equation*}
(see \emph{e.g.} \cite[Proposition~B.2]{GH09}).
The Chas-Sullivan product is defined by composing this morphism
with the concatenation $\phi$:
\begin{equation*}
    * : H_*(\Lambda(f))\otimes H_*(\Lambda(g)) \to
    H_{*-n}(\Lambda(g\circ f)).
\end{equation*}
Let $a,b>0$ be regular values of $P_f$ and $P_g$,
then $\Lambda_{f}^{\leq a}$, $\Lambda_{g}^{\leq b}$
are submanifolds with boundary and one can thus define the Gysin morphism
\begin{equation*}
    H_*(\Lambda_{f}^{\leq a}\times\Lambda_{g}^{\leq b})\to 
    H_{*-n}(\Lambda_{f}^{\leq a}\times_M\Lambda_{g}^{\leq b}).
\end{equation*}
According to (\ref{eq:Ephi}), one can thus define a filtered Chas-Sullivan product
\begin{equation*}
    * : H_*(\Lambda_{f}^{\leq a})\otimes H_*(\Lambda_{g}^{\leq b}) \to
    H_{*-n}(\Lambda_{g\circ f}^{\leq a+b})
\end{equation*}
commuting with the inclusion morphisms.
A relative version of this product is also available
\begin{gather*}
    H_*(\Lambda_{f}^{\leq a},\Lambda_{f}^{\leq a'})\otimes 
    H_*(\Lambda_{g}^{\leq b},\Lambda_{g}^{\leq b'}) \to
    H_{*-n}(\Lambda_{g\circ f}^{\leq a+b},
    \Lambda_{g\circ f}^{\leq \max(a+b',a'+b)}), \\
    H_*(\Lambda_{f}^{\leq a},\Lambda_{f}^{< a})\otimes 
    H_*(\Lambda_{g}^{\leq b},\Lambda_{g}^{< b}) \to
    H_{*-n}(\Lambda_{g\circ f}^{\leq a+b},
    \Lambda_{g\circ f}^{< a+b}),
\end{gather*}
that commutes with each other through inclusion morphisms.

\begin{lem}\label{lem:taustar}
    Let $g:M\to M$ be a smooth map,
    let $(f_s)$ be an $H^1$-homotopy of smooth maps $M\to M$,
    Let $\delta := \delta(f_s)$ (which is $\geq \delta(f_s\circ g)$).
    For every $\alpha\in H_*(\Lambda_g^{\leq a})$,
    $\beta\in H_*(\Lambda_{f_0}^{\leq b})$,
    \begin{equation*}
    \alpha * \tau(f_s)_*\beta = \tau(f_s\circ g)_*(\alpha *\beta) \in H_*(\Lambda_{f_1\circ
        g}^{\leq a+b+\delta}),
    \end{equation*}
    as long as the product $*$ is well-defined.
\end{lem}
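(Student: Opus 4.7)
My plan is to show that the two operations agree already at the space level, via two commuting squares: one combining the concatenation $\phi$ with the $\tau$-construction (from the associativity of $\phi$), and one combining the Gysin morphism with the map $\id \times \tau(f_s)$ (from the naturality of the cap product). Passing to homology and composing then yields the desired equality of Chas--Sullivan products.

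The first square that I would verify is
\begin{equation*}
\begin{array}{ccc}
\Lambda(g)\times_M\Lambda(f_0) & \xrightarrow{\ \id\times\tau(f_s)\ } & \Lambda(g)\times_M\Lambda(f_1) \\
\phi\,\downarrow & & \downarrow\,\phi \\
\Lambda(f_0\circ g) & \xrightarrow{\ \tau(f_s\circ g)\ } & \Lambda(f_1\circ g).
\end{array}
\end{equation*}
The top horizontal arrow is well defined since $\tau(f_s)(\beta)$ starts at $\beta(0)$, so the fiber-product condition $\alpha(1)=\beta(0)$ is preserved, and it respects the expected filtration shift thanks to (\ref{eq:Ephi}). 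For a pair $(\alpha,\beta)$ in the fiber product one has $g(\alpha(0))=\alpha(1)=\beta(0)$, and unwinding definitions the two compositions yield the triple concatenations $\phi(\alpha,\phi(\beta,t\mapsto f_t(\beta(0))))$ and $\phi(\phi(\alpha,\beta),t\mapsto f_t(\beta(0)))$, which are equal by the associativity (\ref{eq:phiassociative}) of $\phi$.

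For the second square, I would identify $\Lambda(g)\times_M\Lambda(f_i)$ with the preimage of the diagonal $\Delta_M\subset M\times M$ under the evaluation $ev(\alpha,\beta):=(\alpha(1),\beta(0))$, so that the Gysin map is cap product with $ev^*\tau_\Delta$ for $\tau_\Delta$ the Thom class of the diagonal. Because $\tau(f_s)(\beta)$ starts at $\beta(0)$, we have the strict equality $ev\circ(\id\times\tau(f_s))=ev$, and the naturality of the cap product then gives the commutation of $(\id\times\tau(f_s))_*$ with the Gysin map at every filtration level. Composing with $\phi_*$ on the fiber-product side and $\tau(f_s\circ g)_*$ on the other, the commutativity of the first square above yields the claimed identity in $H_{*-n}(\Lambda_{f_1\circ g}^{\leq a+b+\delta})$, after inclusion from $H_{*-n}(\Lambda_{f_1\circ g}^{\leq a+b+\delta(f_s\circ g)})$ if needed. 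The main technical point will be justifying the Gysin naturality rigorously in the Hilbert-manifold setting of \cite{GH09}: one must check that $\id\times\tau(f_s)$ pulls back the tubular-neighborhood Thom class used to define the Gysin map on the target to the corresponding class on the source, which, since the evaluations agree strictly, reduces to choosing compatible tubular neighborhoods---essentially the same verification as for the unchanged ($g=\id$) case already present in \cite{GH09}.
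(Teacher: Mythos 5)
Your proof is correct and follows essentially the same route as the paper's, which verifies the same two facts in a single commuting diagram: commutativity of the $\phi$/$\tau$ square via the associativity (\ref{eq:phiassociative}) together with naturality of the Gysin morphisms under $\id\times\tau(f_s)$. Your closing remark on compatible tubular neighborhoods merely makes explicit what the paper subsumes under ``naturality of the Gysin morphisms.''
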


\begin{proof}
    Let us consider the following diagram:
    \begin{equation*}
        \begin{gathered}
            \xymatrix{
                \Lambda^{\leq a}_g\times \Lambda_{f_0}^{\leq b}
                \ar[d]^-{\id\times\tau(f_s)}
                & \Lambda_g^{\leq a}\times_M\Lambda_{f_0}^{\leq b}
                \ar@{_{(}->}[l] \ar[d]^-{\id\times\tau(f_s)} \ar[r]^-{\phi}
                & \Lambda_{f_0\circ g}^{\leq a+b} \ar[d]^-{\tau(f_s\circ g)}
                \\
                \Lambda_g^{\leq a}\times\Lambda_{f_1}^{\leq b+\delta}
                & 
                \Lambda_g^{\leq a}\times_M \Lambda_{f_1}^{\leq b+\delta}
                \ar@{_{(}->}[l] \ar[r]^-{\phi}
                &
                \Lambda_{f_1\circ g}^{\leq a+b+\delta}
            }
        \end{gathered},
    \end{equation*}
    where the unlabeled arrows are inclusion maps.
    By definition of $\tau(f_s)$ and $\tau(f_s\circ g)$
    and by associativity of $\phi$ (\ref{eq:phiassociative}),
    this diagram commutes.
    By naturality of the Gysin morphisms and by definition of the product $*$,
    the conclusion follows.
\end{proof}

In the same way, one proves that $*$ is associative.

The product structure of the homology group $H_*(\Lambda)$ was
studied by Goresky-Hingston, especially in the case where all geodesics
are closed and of same prime length $\ell$ \cite[\S 13-15]{GH09}.
In the sequel, $M$ will satisfy this assumption and
the coefficient ring of the singular homology groups
will be $\Z$ if $M$ is orientable and $\Z/2\Z$ otherwise.
In this case, the energy functional $E_\id$ is a perfect Morse-Bott functional:
it implies in particular that the following sequences of inclusion morphisms are exact
\begin{equation}\label{eq:Eperfect}
    \begin{split}
    0\to H_*(\Lambda^{\leq a})\to H_*(\Lambda^{\leq b}) \to
    H_*(\Lambda^{\leq b},\Lambda^{\leq a})\to 0, \\
    0\to H^*(\Lambda^{\leq b},\Lambda^{\leq a})\to H^*(\Lambda^{\leq b})
    \to H^*(\Lambda^{\leq a})\to 0,
    \end{split}
\end{equation}
for every $0\leq a\leq b\leq c\leq +\infty$, with $\Lambda^{\leq+\infty}:=\Lambda$,
the same being true replacing $\Lambda^{\leq\lambda}$ with $\Lambda^{<\lambda}$.
Moreover, the filtered product $*$ can be defined for all values.
The critical values of $P_f$ correspond to the positive multiples of $\ell$
with associated critical submanifolds diffeomorphic to $SM$
\emph{via} $\gamma\mapsto \dot{\gamma}(0)/\|\dot{\gamma}(0)\|$.
Let $\lambda_1\geq 0$ be the Morse index of the critical submanifold $\Sigma_1$ associated
with the critical value $\ell$.
The manifold $\Sigma_1$ can be oriented and one defines
\begin{equation*}
    \Theta\in H_{2n-1+\lambda_1}(\Lambda^{\leq\ell})
\end{equation*}
the image of the fundamental class of $\Sigma_1$ under the canonical isomorphism
\begin{equation}\label{eq:isoSigma}
    H_*(\Lambda^{\leq\ell})\simeq H_*(\Lambda^{\leq 0})\oplus H_{*-\lambda_1}(\Sigma_1)
\end{equation}
resulting from the splitting of the exact sequence (\ref{eq:Eperfect}) for the pair
$(\Lambda^{\leq\ell},\Lambda^{\leq 0})$ induced by $\mathrm{ev}:\Lambda\to\Lambda^{\leq 0}$,
$\mathrm{ev}(\gamma)\equiv \gamma(0)$,
and the Morse-Bott isomorphism
$H_*(\Lambda^{\leq\ell},\Lambda^{\leq 0})\simeq H_{*-\lambda_1}(\Sigma_1)$.
Let us set $b:=\lambda_1+n-1$.

\begin{thm}[{\cite[Theorem~13.4]{GH09}}]\label{thm:GH}
    Let $M$ be an $n$-dimensional closed Zoll Riemannian manifold
    of length $\ell$.
    The product 
    \begin{equation*}
        \Theta*\cdot : H_*(\Lambda,\Lambda^{\leq 0})\to
        H_{*+b}(\Lambda,\Lambda^{\leq 0})
    \end{equation*}
    with the class $\Theta$ is injective
    and induces an isomorphism
    \begin{equation*}
        H_*(\Lambda^{\leq r\ell},\Lambda^{<r\ell})\to H_{*+b}(\Lambda^{\leq (r+1)\ell},
        \Lambda^{<(r+1)\ell})
    \end{equation*}
    for every positive integer $r$.
\end{thm}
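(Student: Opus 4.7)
The plan is to identify both sides of the asserted isomorphism with $H_\ast(SM;R)$ (up to a common degree shift) using Morse-Bott theory, and to check that $\Theta\ast\cdot$ acts as the identity under these identifications. Since $M$ is Zoll of length $\ell$, the energy $E$ is Morse-Bott on $\Lambda$: the nontrivial critical levels are $(r\ell)^2$ for $r\geq 1$, with critical submanifold $\Sigma_r\cong SM$ consisting of $r$-fold iterates, and Morse index $\lambda_r$ (the negative normal bundle is orientable over $R$ by the choice of $R$). The key numerical input is Bott's iteration formula, which in the Zoll setting degenerates to an arithmetic progression
\[
\lambda_{r+1}-\lambda_r \;=\; \lambda_1 + (n-1) \;=\; b,
\]
so the Morse-Bott isomorphism
\[
\Phi_r : H_k(\Lambda^{\leq r\ell},\Lambda^{<r\ell}) \xrightarrow{\ \sim\ } H_{k-\lambda_r}(\Sigma_r;R) \cong H_{k-\lambda_r}(SM;R)
\]
matches the degrees of the two sides of the claimed isomorphism.

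Given $\alpha\in H_l(\Lambda^{\leq r\ell},\Lambda^{<r\ell})$ with $\Phi_r(\alpha)=a$, I would compute $\Theta\ast\alpha$ via the Morse model of the Chas-Sullivan product. The concatenation restricts to a map $\phi : \Sigma_1\times_M\Sigma_r \to \Lambda^{\leq(r+1)\ell}$ whose image lies in a neighborhood of $\Sigma_{r+1}$ retracting onto $\Sigma_{r+1}$ under negative gradient flow; composing with this retraction gives $\psi : \Sigma_1\times_M\Sigma_r \to \Sigma_{r+1}$, which to a pair $(\gamma_1,\gamma_r)$ with $\gamma_1(1)=\gamma_r(0)$ associates the unique smooth $(r+1)$-iterate whose initial unit velocity is $\dot\gamma_1(0)/\|\dot\gamma_1(0)\|$. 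Under the identifications $\Sigma_1\cong SM\cong\Sigma_{r+1}$, $\psi$ is thus projection to the first factor. At the Morse-Bott level, $\Theta\ast\alpha$ becomes $\psi_\ast\iota^{!}([\Sigma_1]\times a)$, where $\iota^{!}$ is the Gysin morphism for the codimension-$n$ inclusion $\Sigma_1\times_M\Sigma_r\hookrightarrow\Sigma_1\times\Sigma_r$. Since $[\Sigma_1]$ is the fundamental class and $\iota$ cuts out the basepoint-matching condition, $\iota^{!}([\Sigma_1]\times a)$ is represented by the slice $\Sigma_1\times_M(\text{cycle for }a)$, whose $\psi$-pushforward recovers $a$; hence $\Phi_{r+1}(\Theta\ast\alpha)=a$.

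This simultaneously yields the isomorphism on each subquotient, and the injectivity of $\Theta\ast\cdot$ on $H_\ast(\Lambda,\Lambda^{\leq 0})$ then follows from the perfect Morse-Bott splitting $H_\ast(\Lambda,\Lambda^{\leq 0})\cong\bigoplus_{r\geq 1}H_\ast(\Lambda^{\leq r\ell},\Lambda^{<r\ell})$ afforded by (\ref{eq:Eperfect}). The hard part will be the Morse-Bott analysis of the Chas-Sullivan product itself: because $\ast$ combines a Gysin map with a concatenation, and neither operation preserves critical submanifolds, one must choose compatible tubular neighborhoods of $\Sigma_1\times_M\Sigma_r$ inside $\Lambda\times\Lambda$ and of $\Sigma_{r+1}$ inside $\Lambda$, and then carefully track orientations. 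The discrepancy $\lambda_{r+1}-\lambda_1-\lambda_r=-(n-1)$ between the ranks of the relevant negative normal bundles must be absorbed exactly into the codimension-$n$ Gysin shift, and it is here that the matching of $R$ to the orientability of $M$ becomes essential.
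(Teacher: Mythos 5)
This theorem is not proved in the paper at all: it is imported verbatim from Goresky--Hingston \cite{GH09} (their Theorem~13.4), so the relevant benchmark is their argument. Your overall strategy --- identify each level homology $H_*(\Lambda^{\leq r\ell},\Lambda^{<r\ell})$ with $H_{*-\lambda_r}(SM;R)$ by the Morse--Bott isomorphism, use the iteration formula $\lambda_r=r\lambda_1+(r-1)(n-1)$ to match degrees, compute $\Theta*\cdot$ on level homology, and deduce injectivity on $H_*(\Lambda,\Lambda^{\leq 0})$ from perfectness --- is indeed the shape of the actual proof. But the central step, the computation of $\Theta*\cdot$ on level homology, is not established, and the mechanism you propose for it fails. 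A pair $(\gamma_1,\gamma_r)\in\Sigma_1\times_M\Sigma_r$ with common basepoint but different initial directions concatenates to a loop with corners: it sits on the level $P=(r+1)\ell$ but is in no sense close to $\Sigma_{r+1}$, and under the negative gradient flow it drops strictly below that level and typically converges to critical points at much lower levels. So the image of $\phi$ is not contained in any neighborhood of $\Sigma_{r+1}$ admitting a gradient-flow retraction, and the map $\psi$ (``projection to the first factor'') does not exist on all of $\Sigma_1\times_M\Sigma_r$. What is true is that in the relative group $H_*(\Lambda^{\leq(r+1)\ell},\Lambda^{<(r+1)\ell})$ everything that flows below is killed, so the computation localizes by excision to the codimension-$(n-1)$ locus of $\Sigma_1\times_M\Sigma_r$ where the two initial directions agree; one must then compare, near that locus, the codimension-$n$ Gysin map with the Thom class of the negative bundle of $\Sigma_{r+1}$, keeping track of orientations (this is where the choice of $R$ enters). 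That local analysis is exactly the content of \cite[Theorem~13.4]{GH09}, and you explicitly defer it.

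There is also a concrete degree check that exposes the problem with the formula you do write down: with $a\in H_{\deg a}(SM)$, the class $\psi_*\iota^{!}([\Sigma_1]\times a)$ has degree $(2n-1)+\deg a-n=\deg a+(n-1)$, so it cannot equal $\Phi_{r+1}(\Theta*\alpha)$, which must have degree $\deg a$. Equivalently, the index discrepancy is $\lambda_{r+1}-\lambda_1-\lambda_r=+(n-1)$ (not $-(n-1)$ as you state), and these $n-1$ extra negative directions of $\Sigma_{r+1}$ are precisely what the naive ``product of critical submanifolds'' picture misses: they have to be produced by the normal geometry along the direction-matching locus, not by a retraction of the whole fiber product. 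So the proposal is a reasonable plan following the same route as Goresky--Hingston, but with the key step both unproven and, as formulated, incorrect; your closing paragraph acknowledging that ``the hard part'' remains is accurate, and that hard part is the theorem.
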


\subsection{Min-max critical values}

Given $f:M\to M$,
let us define min-max critical values of $P_f$ associated with
homology classes of $\Lambda(f)$.
Given $\alpha\in H_*(\Lambda(f))$, let us define
\begin{equation*}
c(\alpha,f) := \inf\left\{ \lambda\geq 0\ |\ 
\alpha\in\im\left(H_*(\Lambda^{\leq \lambda}_f)\to H_*(\Lambda_f)\right)\right\}.
\end{equation*}
By the Morse deformation lemma, if $c(\alpha,f)>0$ then it is a critical value
of $P_f$.
A consequence of Theorem~\ref{thm:GH} and the perfectness of $E_\id$
in the Zoll case
is the following corollary.

\begin{cor}\label{cor:cThetakid}
    Let $M$ be a Zoll Riemannian manifold with prime length $\ell$.
    For every $\alpha\in H_*(\Lambda)$, one has $c(\alpha,\id)\in\N\ell$ and
    if $c(\alpha,\id)>0$,
    then
    \begin{equation*}
        c(\Theta^k *\alpha,\id) = c(\alpha,\id) + k\ell,
        \quad \forall k\in\N.
    \end{equation*}
\end{cor}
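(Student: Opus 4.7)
The first assertion follows at once from the variational setup: by the Morse deformation lemma, if $c(\alpha,\id)>0$ then it is a critical value of $P_{\id}$, and in the Zoll case the critical values of $P_{\id}$ are exactly the positive multiples of $\ell$. For the second assertion, I would proceed by induction on $k$, so the only non-trivial step is to establish
\begin{equation*}
c(\Theta*\alpha,\id)=c(\alpha,\id)+\ell \quad \text{whenever } c(\alpha,\id)>0.
\end{equation*}

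Write $r\ell:=c(\alpha,\id)$ with $r\geq 1$. The first preparatory move is to transfer the min-max value to the relative group $H_*(\Lambda,\Lambda^{\leq 0})$. Using the splitting of the short exact sequences (\ref{eq:Eperfect}) provided by the evaluation retraction $\mathrm{ev}:\Lambda\to\Lambda^{\leq 0}$, one verifies that if $\bar\alpha$ denotes the image of $\alpha$ in $H_*(\Lambda,\Lambda^{\leq 0})$, then the obvious relative min-max value $\bar c(\bar\alpha)$ equals $c(\alpha,\id)=r\ell$; in particular $\bar\alpha\neq 0$. The upper bound $c(\Theta*\alpha,\id)\leq (r+1)\ell$ is then immediate from the fact that the Chas-Sullivan product is filtered: since $\Theta$ is represented in $H_*(\Lambda^{\leq\ell})$ and $\alpha$ is represented in $H_*(\Lambda^{\leq r\ell})$, their product lies in the image of $H_*(\Lambda^{\leq(r+1)\ell})\to H_*(\Lambda)$.

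For the matching lower bound, I would exploit Theorem~\ref{thm:GH}. By minimality of $r\ell$, the class $\bar\alpha$ cannot lift to $H_*(\Lambda^{<r\ell},\Lambda^{\leq 0})$, so its image under the natural map
\begin{equation*}
\pi_r: H_*(\Lambda^{\leq r\ell},\Lambda^{\leq 0})\longrightarrow H_*(\Lambda^{\leq r\ell},\Lambda^{<r\ell})
\end{equation*}
is non-zero on any lift of $\bar\alpha$. Naturality of the filtered product shows that the composition $\pi_{r+1}\circ(\Theta*\cdot)$ factors through $\Theta*(\pi_r(\cdot))$, where $\Theta*\cdot:H_*(\Lambda^{\leq r\ell},\Lambda^{<r\ell})\to H_{*+b}(\Lambda^{\leq(r+1)\ell},\Lambda^{<(r+1)\ell})$ is the isomorphism of Theorem~\ref{thm:GH}. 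Hence the image of $\Theta*\bar\alpha$ in $H_{*+b}(\Lambda^{\leq(r+1)\ell},\Lambda^{<(r+1)\ell})$ is non-zero, forcing $c(\Theta*\alpha,\id)\geq(r+1)\ell$.

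The main technical nuisance I expect is the bookkeeping for Step~1 — namely verifying that the splitting of (\ref{eq:Eperfect}) is compatible with inclusions so that absolute and relative min-max values coincide — together with confirming that the filtered Chas-Sullivan product commutes with the quotient maps $\pi_r$ in the way asserted above. Once this naturality is in hand, the induction on $k$ closes immediately and gives $c(\Theta^k*\alpha,\id)=r\ell+k\ell$.
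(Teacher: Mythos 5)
Your argument is correct and is essentially the paper's own proof: the upper bound comes from the filtered product commuting with inclusion morphisms, and the lower bound from observing that, by minimality of $r\ell$, a lift of the class at level $r\ell$ has non-zero image in $H_*(\Lambda^{\leq r\ell},\Lambda^{<r\ell})$ and then pushing it through the isomorphism of Theorem~\ref{thm:GH}. The only cosmetic differences are that the paper works directly with an absolute lift $\beta\in H_*(\Lambda^{\leq r\ell})$ rather than detouring through $H_*(\Lambda,\Lambda^{\leq 0})$ (which spares the splitting bookkeeping you flag), and treats all $k$ at once via $\Theta^k*\cdot$ instead of inducting; in both versions the final step ``non-zero image in the relative group at level $(r+k)\ell$ implies $c\geq(r+k)\ell$'' rests on the injectivity of the inclusion morphisms coming from the perfectness statement (\ref{eq:Eperfect}).
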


\begin{proof}
    Since the $c(\alpha,\id)$ are either $0$ or critical values of $P_{\id}$,
    one has $c(\alpha,\id)\in\N\ell$ for all $\alpha\in H_*(\Lambda)$.
    Let $\alpha\in H_*(\Lambda)$ be such that $c(\alpha,\id)>0$,
    so $c(\alpha,\id) = r\ell$ with $r\in\N^*$ and there exists
    $\beta\in H_*(\Lambda^{\leq r\ell})$ the image of which is $\alpha$
    under the inclusion morphism.
    Since $\Theta^k*\cdot$ commutes with inclusion morphisms,
    the image of $\Theta^k*\beta\in H_{*+kb}(\Lambda^{\leq (r+k)\ell})$ in
    $H_*(\Lambda)$ is $\Theta^k*\alpha$,
    so $c(\Theta^k*\alpha,\id)\leq (r+k)\ell$.

    The image $\gamma$ of $\beta$ under the inclusion morphism
    $H_*(\Lambda^{\leq r\ell})\to H_*(\Lambda^{\leq r\ell},\Lambda^{<r\ell})$
    is non-zero since $c(\alpha,\id)\geq r\ell$
    (by looking at the long exact sequence of the couple
    $(\Lambda^{\leq r\ell},\Lambda^{<r\ell})$).
    Let us consider the commuting diagram:
    \begin{equation*}
        \begin{gathered}
            \xymatrix{
                H_*(\Lambda^{\leq r\ell}) \ar[d] \ar[r]^-{\Theta^k*\cdot}
                & H_{*+kb}(\Lambda^{\leq (r+k)\ell}) \ar[d]
                \\
                H_*(\Lambda^{\leq r\ell},\Lambda^{<r\ell})
                \ar[r]^-{\Theta^k*\cdot}_-{\simeq}
                & H_{*+kb}(\Lambda^{\leq (r+k)\ell},\Lambda^{<(r+k)\ell})
            }
        \end{gathered},
    \end{equation*}
    where the vertical arrows are inclusion morphisms.
    Since the bottom arrow is an isomorphism according to Theorem~\ref{thm:GH},
    the image of $\gamma$ is non-zero and so is the image of $\Theta^k*\beta$
    under the right vertical arrow.
    This implies that $\Theta^k*\beta$ is not in the image of
    $H_{*+kb}(\Lambda^{<(r+k)\ell})$ under the inclusion morphism
    so $c(\Theta^k*\alpha,\id)\geq (r+k)\ell$.
\end{proof}

\begin{prop}\label{prop:cThetaf}
    Let $M$ be a Zoll Riemannian manifold with prime length $\ell$
    and let $(f_s)$ be a $H^1$-homotopy from $\id$ to $f$ and
    let us set $\tau := \tau(f_s)$ and $\delta:=\delta(f_s)$.
    \begin{enumerate}
        \item For every $\alpha\in H_*(\Lambda)$,
            $|c(\alpha,\id)-c(\tau_*\alpha,f)| \leq \delta$.
        \item For every $\alpha\in H_*(\Lambda)$,
            \begin{equation*}
                c(\Theta^{k+1} *\tau_*\alpha,f) = c(\Theta^k *\tau_*\alpha,f)
                + \ell - \varepsilon_k(\alpha),\quad
                \forall k\in\N,
            \end{equation*}
            where $(\varepsilon_k(\alpha))$ is a sequence of non-negative
            reals that converges to $0$ when $c(\alpha,\id)>0$.
    \end{enumerate}
\end{prop}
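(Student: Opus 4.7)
The plan is to derive part (1) from the $\delta$-interleaving of Corollary~\ref{cor:taufs} together with Lemma~\ref{lem:taufs}, and part (2) by combining an upper bound from the filtered Chas–Sullivan product with a lower bound obtained from part (1), Lemma~\ref{lem:taustar}, and Corollary~\ref{cor:cThetakid}.

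For part (1), I first note that $\delta(f_{1-s})=\delta(f_s)=\delta$ since the energy functional is invariant under the time-reversal $t\mapsto 1-t$. For any $\lambda>c(\alpha,\id)$, the class $\alpha$ lies in the image of $H_*(\Lambda^{\leq\lambda})\to H_*(\Lambda)$, so Corollary~\ref{cor:taufs} places $\tau_*\alpha$ in the image of $H_*(\Lambda_f^{\leq\lambda+\delta})\to H_*(\Lambda_f)$; taking the infimum over such $\lambda$ gives $c(\tau_*\alpha,f)\leq c(\alpha,\id)+\delta$. Lemma~\ref{lem:taufs} yields $\tau(f_{1-s})_*\tau_*\alpha=\alpha$ in $H_*(\Lambda)$, and applying the same interleaving argument with the reversed homotopy $(f_{1-s})$ in place of $(f_s)$ gives the reverse inequality $c(\alpha,\id)\leq c(\tau_*\alpha,f)+\delta$.

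For the upper bound of part (2), set $d_k:=c(\Theta^k*\tau_*\alpha,f)$. Since $\Theta\in H_*(\Lambda^{\leq\ell})$ by construction, and $\Theta^k*\tau_*\alpha$ is in the image of $H_*(\Lambda_f^{\leq d_k+\varepsilon})\to H_*(\Lambda_f)$ for every $\varepsilon>0$, the filtered product together with associativity of $*$ places
\begin{equation*}
    \Theta^{k+1}*\tau_*\alpha=\Theta*(\Theta^k*\tau_*\alpha)
\end{equation*}
in the image of $H_*(\Lambda_f^{\leq\ell+d_k+\varepsilon})\to H_*(\Lambda_f)$. Letting $\varepsilon\to 0$ gives $d_{k+1}\leq d_k+\ell$, hence $\varepsilon_k(\alpha):=\ell-(d_{k+1}-d_k)\geq 0$, and equivalently $(d_k-k\ell)_{k\in\N}$ is non-increasing.

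The main point, and the only subtle step, is to establish convergence $\varepsilon_k(\alpha)\to 0$ when $c(\alpha,\id)>0$. For this I apply Lemma~\ref{lem:taustar} to rewrite $\Theta^k*\tau_*\alpha=\tau_*(\Theta^k*\alpha)$, then combine part (1) with Corollary~\ref{cor:cThetakid} to obtain
\begin{equation*}
    d_k\geq c(\Theta^k*\alpha,\id)-\delta = c(\alpha,\id)+k\ell-\delta.
\end{equation*}
Thus $(d_k-k\ell)_{k\in\N}$ is non-increasing and bounded below by $c(\alpha,\id)-\delta$, hence convergent; the telescoping identity $\varepsilon_k(\alpha)=(d_k-k\ell)-(d_{k+1}-(k+1)\ell)$ then forces $\varepsilon_k(\alpha)\to 0$.
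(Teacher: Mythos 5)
Your proposal is correct and follows essentially the same route as the paper: part (1) via the $\delta$-interleaving of Corollary~\ref{cor:taufs} (with $\tau(f_{1-s})$ for the reverse bound), the inequality $d_{k+1}\leq d_k+\ell$ via the filtered Chas--Sullivan product with $\Theta\in H_*(\Lambda^{\leq\ell})$ commuting with inclusions, and the convergence $\varepsilon_k(\alpha)\to 0$ from Lemma~\ref{lem:taustar}, part (1), and Corollary~\ref{cor:cThetakid}. Your phrasing of the last step as ``$(d_k-k\ell)$ is non-increasing and bounded below'' is just the monotone-sequence form of the paper's telescoping bound $\sum_i\varepsilon_i(\alpha)\leq 2\delta$, so the arguments coincide.
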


\begin{proof}
    The first statement is a direct consequence of the fact that $\tau(f_s)$
    and $\tau(f_{1-s})$ induce a $\delta$-interleaving in the sense of
    Corollary~\ref{cor:taufs}.
    Let us consider the commuting diagram
    \begin{equation*}
        \begin{gathered}
            \xymatrix{
                H_*(\Lambda_f^{\leq\lambda}) \ar[r]^-{\Theta*\cdot} \ar[d]
                & H_{*+b}(\Lambda_f^{\leq\lambda+\ell}) \ar[d] \\
                H_*(\Lambda_f) \ar[r]^-{\Theta*\cdot}
                & H_{*+b}(\Lambda_f)
            }
        \end{gathered},
    \end{equation*}
    where the vertical arrows are inclusion morphisms and $\lambda\geq 0$.
    It implies that if $c(\alpha,f) < \lambda$ then
    $c(\Theta*\alpha,f) < \lambda + \ell$.
    In particular, for every $\alpha\in H_*(\Lambda)$,
    \begin{equation*}
        c(\Theta^{k+1} *\tau_*\alpha,f) = c(\Theta^k *\tau_*\alpha,f)
        + \ell - \varepsilon_k(\alpha),\quad
        \forall k\in\N,
    \end{equation*}
    for some non-negative sequence $(\varepsilon_k(\alpha))$.
    According to the first statement of this proposition,
    for every $k\in\N$,
    \begin{equation*}
        \sum_{i=0}^{k-1} \varepsilon_i(\alpha) =
        c(\tau_*\alpha,f)-c(\Theta^k*\tau_*\alpha,f) + k\ell
        \leq 2\delta + c(\alpha,\id)-c(\Theta^k*\alpha,\id) + k\ell,
    \end{equation*}
    where we have used that $\Theta^k*\tau_*\alpha = \tau_*(\Theta^k*\alpha)$
    (Lemma~\ref{lem:taustar}).
    Therefore, Corollary~\ref{cor:cThetakid} implies that the series
    $\sum_i \varepsilon_i(\alpha)$ is bounded by $2\delta$
    when $c(\alpha,\id)>0$, bringing the conclusion.
\end{proof}

\subsection{Proof of Theorems~\ref{thm:zoll} and \ref{thm:zolldegenerate}}

\begin{proof}[Proof of Theorem~\ref{thm:zoll}]
    Let $N:=\sum_i \beta_i(SM)$ and let $\alpha_1,\ldots,\alpha_N\in H_*(\Lambda)$
    be an independent family satisfying $c(\alpha_i,\id)=\ell$ for all $i$.
    Such a family can be obtained as follows: one takes the image in
    $H_*(\Lambda^{\leq\ell})$
    of an independent family of $0\oplus H_*(\Sigma_1) \simeq H_*(SM)$
    under the isomorphism (\ref{eq:isoSigma}),
    then takes the image of this family under the inclusion morphism
    $H_*(\Lambda^{\leq\ell})\to H_*(\Lambda)$
    (this last morphism is injective by exactness of (\ref{eq:Eperfect})).
    The exactness of (\ref{eq:Eperfect}) together with Theorem~\ref{thm:GH}
    implies that the family $(\Theta^k * \alpha_i)_{k,i}$, $k\in\N$,
    $1\leq i\leq N$, is independent in $H_*(\Lambda)$.

    Let $(f_s)$ be a $H^1$-homotopy from $\id$ to $f:M\to M$ and
    let $\tau:=\tau(f_s)$.
    Applying the isomorphism $\tau_*:H_*(\Lambda)\to H_*(\Lambda(f))$,
    the family $(\Theta^k*\tau_*\alpha_i)_{k,i}$ is independent in
    $H_*(\Lambda(f))$ (we have used Lemma~\ref{lem:taustar}).
    Let us assume that $f$ has finitely many translated points in $SM$.
    Since $M$ is a Zoll Riemannian manifold,
    the set of positive time-shifts of a translated point of $f$ is
    $t+\N\ell$ for some $t\in (0,\ell]$.
    Therefore, the set of (positive) critical values of $P_f$ is
    invariant by $t\mapsto t+\ell$, discrete such that any interval
    $(a,a+\ell]$, $a\geq 0$, contains a fixed finite number of them.
    Let us study the sequences $(c_k^i) := (c(\Theta^k*\tau_*\alpha_i,f))$
    for $1\leq i\leq N$, the image of which is contained in
    this discrete set.
    Since $c_{k+1}^i = c_k^i + \ell -\varepsilon_k^i$ with
    $\varepsilon_k^i\to 0$ (Proposition~\ref{prop:cThetaf}),
    by discreetness of the $\N\ell$-invariant
    set of critical value, $\varepsilon_k^i = 0$ for large $k$'s.
    We deduce that for $A>0$ large, for each $i$, there exists
    a unique $k_i$ such that $c_{k_i}^i\in (A,A+\ell]$.
    By non-degeneracy of the critical points of $P_f$,
    the Morse inequalities in the window of values $(A,A+\ell]$
    imply that the number of critical points in $P_f^{-1}(A,A+\ell]$
    is not less than the cardinal of $(\Theta^{k_i}*\tau_*\alpha_i)_i$,
    which is $N$.
    For each translated point, only one time-shift belongs to $(A,A+\ell]$,
    which brings the conclusion.
\end{proof}

\begin{proof}[Proof of Theorem~\ref{thm:zolldegenerate}]
    Let $N:= CL(SM)$ and let $u_1,\ldots,u_N \in H^*(\Lambda)$
    be non-zero classes of positive degree such that
    \begin{equation*}
        c(\Theta\frown(u_1\smile\cdots\smile u_N),\id)=\ell.
    \end{equation*}
    Such a family can be obtained as follows:
    let $v_1,\ldots, v_N\in H^*(SM)$ be non-zero classes of positive degree
    such that $v_1\smile\cdots\smile v_N\neq 0$ (which exist by definition of $CL(SM)$),
    then $[SM]\frown(v_1\smile\cdots\smile v_N)\neq 0$
    and we apply (\ref{eq:isoSigma}) together with the exactness of
    (\ref{eq:Eperfect}) to send $[SM]$ to $\Theta$ (by definition of $\Theta$)
    and the $v_i$'s to the $u_i$'s.

    Let $\alpha_0,\ldots,\alpha_N\in H_*(\Lambda)$ be the subordinated classes
    $\alpha_i := \Theta\frown(u_1\smile\cdots\smile u_i)$
    such that $c(\alpha_i,\id)=\ell$ for all $i$
    (as by subordination, $c(\alpha_{i+1},\id)\leq c(\alpha_i,\id)$
    and $c(\Theta,\id)=\ell$).
    Let $(f_s)$ be a $H^1$-homotopy as in the statement of Theorem~\ref{thm:zolldegenerate},
    so that $\delta(f_s) < \ell/2$, and let $\tau:=\tau(f_s)$.
    By subordination, $c(\tau_*\alpha_i,f)$ is non-increasing with $i$
    and by the first point of Proposition~\ref{prop:cThetaf},
    \begin{equation*}
        \frac{\ell}{2} < c(\tau_*\alpha_N,f) \leq c(\tau_*\alpha_{N-1},f)
        \leq \cdots \leq c(\tau_*\alpha_0,f) < \frac{3\ell}{2}.
    \end{equation*}
    According to the Lusternik-Schnirelmann theorem
    (see \emph{e.g.} \cite[\S II.3.2]{Chang}),
    if $P_f$ has a finite number of critical points in the window
    $(\ell/2,3\ell/2)$ then $c(\tau_*\alpha_i,f)$ is decreasing.
    Since the $c(\tau_*\alpha_i,f)$'s are critical values of $P_f$,
    the conclusion follows by Corollary~\ref{cor:variational}.
\end{proof}

\bibliographystyle{amsplain}
\bibliography{biblio} 
\end{document}